\DeclarePairedDelimiter\bra{\langle}{\rvert}
\DeclarePairedDelimiter\ket{\lvert}{\rangle}
\DeclarePairedDelimiterX\braket[2]{\langle}{\rangle}{#1\,\delimsize\vert\,\mathopen{}#2}
\newtheorem{theorem}{Theorem}[section]
\newtheorem{remark}[theorem]{Remark}
\newtheorem{definition}[theorem]{Definition}
\newtheorem{lemma}[theorem]{Lemma}
\newtheorem{prop}[theorem]{Proposition}
\newcommand{\uu}{\textbf{z}}
\begin{document}
\title{Enhancing the controllability of quantum systems via a static field
}
\author{Ruikang Liang\thanks{R.~Liang, M.~Sigalotti and Ugo~Boscain are with Sorbonne Universit\'e, Universit\'e 
Paris Cit\'e, CNRS, INRIA, Laboratoire Jacques-Louis Lions, LJLL, F-75005 Paris, France {\tt 
   ruikang.liang@inria.fr, 
  ugo.boscain@inria.fr,
mario.sigalotti@inria.fr}.}, Eugenio Pozzoli\thanks{E.~Pozzoli is with Univ Rennes, CNRS, IRMAR - UMR 6625, F-35000 Rennes, France {\tt eugenio.pozzoli@univ-rennes.fr}}, Monika Leibscher\thanks{M.~Leibscher and C.P.~Koch are with Freie Universit\"at Berlin, Dahlem Center for Complex Quantum Systems and Fachbereich Physik, Arnimallee 14, 14195 Berlin, Germany {\tt  monika.leibscher@fu-berlin.de,
christiane.koch@fu-berlin.de}}, Mario Sigalotti\footnotemark[1],\\
Christiane P. Koch\footnotemark[3], and Ugo Boscain\footnotemark[1]
}

\maketitle

\begin{abstract}
We provide a sufficient condition for the controllability of a bilinear closed quantum system 
{ 
steered by a static field and a time-varying field,
based on the notion 
of 
}
weakly conically connected spectrum. More precisely, we show that if a controlled Hamiltonian with two inputs has a weakly conically connected spectrum, then, freezing one of the two inputs at almost every constant value, the obtained single-input system is controllable. The result is illustrated with two examples,  { enantio-selective} excitation in a chiral molecule and the driven Jaynes--Cummings Hamiltonian.
\end{abstract}

\section{Introduction}
Consider a bilinear controlled closed quantum system
\begin{equation}\label{proto-system}
  { 
  i\dot{\psi}=H(u)\psi=\Big(H_{0}+\sum_{\ell=1}^{m}u_{\ell}H_{\ell}\Big)\psi,}
\end{equation} 
where ${ \psi=}\psi(t)$ belongs to the unit sphere of a finite- or infinite-dimensional { complex} Hilbert space, $H_0$ is the internal Hamiltonian of the system, $H_1,\ldots,H_m$ are operators describing the coupling between the system and the controls ${ u_1
,\ldots u_m
}$.

The controls can account for many different physical interactions (e.g., electric or magnetic fields). 
%
Depending on 
the physical constraints on
the control $u_\ell$ and on the corresponding coupling $H_\ell$, 
it might not be
possible 
to make ${ u_\ell
}$ oscillate at the desired frequency (e.g., the frequency $E_{k}-E_{j}$ if { $H_\ell$} is coupling
the energy levels 
$E_{k}$ and $E_{j}$) and actually $u_\ell$ should be considered almost constant.
Such a situation results in having 
less controls on the system, since some of them should be treated as static. However, the possibility of choosing  the value of such static fields can 
be enough to guarantee
controllability.

This occurs for instance in molecular dynamics, where a constant electric field is applied to lift spectral degeneracies of the rotational drift Hamiltonian. This procedure is usually referred to as the Stark effect. Another external field can then be switched on and off, with a time-dependent frequency, to displace the state from an initial to a target condition: since the system is experiencing a static external field which is supposed to lift the degeneracies, this is now easier since it is more likely to possess a non-degenerate chain of transitions between those states. For such control strategies in the presence of a static field and a physical application we refer, e.g., to the recent work \cite{Leibscher2024},
dealing with the task of creating chiral vibrational wave packets in ensembles of achiral molecules. 
 The dependence of the controllability properties of the system on the strength of the static field is studied in \cite{Leibscher2024} by  using graphical methods accounting for uncoupled 
transitions among energy levels obtained by resonant controls. Such graphical methods extend the enantio-selective controllability analysis for chiral quantum rotors proposed in \cite{Leibscher2022,Pozzoli2022}.



In this paper we propose a novel technique to study the controllability of systems subject both to time-varying and static fields, 
which leverages on the presence of (weakly) conical intersections in 
the spectrum of the Hamiltonian.

Consider a system driven by a real Hamiltonian and by two controls. In \cite{liang:hal-04174206} 
we proved that if the spectrum is weakly conically connected, then the system is exactly controllable in the finite-dimensional case and approximately controllable in the infinite-dimensional one. The notion of weakly conically connected, introduced in \cite{liang:hal-04174206}, generalizes the more 
classical notion of conically connected spectrum, formalized in \cite{boscain2015approximate}. In both cases, the main assumption behind this notion is that the spectrum is discrete and that every two nearby energy levels of the system are connected by an eigenvalue intersection. For conically connected spectra these intersections must be conical, while for weakly conically 
connected spectra it is enough that a conical direction through the intersection exists. Moreover, in the case of weakly conically 
connected spectra we replace the assumption that eigenvalue { intersections} cannot pile up at a common value of the control by another, weaker, assumption, stating that the eigenvalue intersections have rationally unrelated germs in a suitable sense (see Definition~\ref{def:unrelated_germs}). 
Both for conically connected and weakly conically connected spectra, the controllability result is reflected in an adiabatic control strategy to steer the system between two eigenstates 
going through eigenvalue intersections along conical directions.


The main result of the present study is that, under the assumption that the spectrum is weakly conically connected,  
freezing one of the two controls, then,  for almost every choice of its constant 
value,   the system is controllable (exactly for finite-dimensional systems and approximately for infinite-dimensional ones) via the 
other
control.

The price to pay is that, since only one control can be modulated as a function of time, the system cannot be controlled via adiabatic techniques and hence the explicit expression of the 
non-constant
control realizing the transition is not easily computable (although in principle possible via Lie--Galerkin techniques, see, for instance, \cite{Caponigro2012,chambrionautomatica,chambrion-pozzoli}).

The structure of the paper is the following. In Section~\ref{s-ass} we introduce the main assumptions on the class of systems that we are considering. 
In Section~\ref{s-contr} we recall the 
controllability 
results from \cite{liang:hal-04174206}. Section~\ref{s-main} contains the statement and the proof of our main result (Theorem \ref{theorem:main}). 
In Sections~\ref{s-enantio} and \ref{s-jaynes} we apply our theory on a 3-level enantio-selectivity model and on the driven Jaynes--Cummings system.

\section{Assumptions}
\label{s-ass}
We consider the controllability of 
{ a general}
bilinear Schr\"odinger equation
\begin{equation}\label{system}
    i\dot{\psi}(t)=H(u(t))\psi(t)=\Big(H_{0}+\sum_{\ell=1}^{m}u_{\ell}(t)H_{\ell}\Big)\psi(t),
\end{equation} 
where $\psi(t)$ belongs to a complex Hilbert space $\mathcal{H}$, $H(u(t))$ is a self-adjoint operator on $\mathcal{H}$, and the control $u(\cdot)=\big(u_{1}(\cdot),\dotsc,u_{m}(\cdot)\big)$ takes values in an open and connected subset $U$ of $\mathbb{R}^{m}$. We say that System~\eqref{system} satisfies \textbf{(H)} if
\begin{itemize}
    \item $H_{0},\dotsc,H_{m}$ $(m\geq 2)$ are self-adjoint operators on $\mathcal{H}$ with $\dim(\mathcal{H})<\infty$,
\end{itemize}
    and that System~\eqref{system} satisfies $\textbf{(}\textbf{H}^{\infty}\textbf{)}$ if the following assumptions hold true:
\begin{itemize}
    \item $\mathcal{H}$ is a separable Hilbert space with $\dim(\mathcal{H})=\infty$,
    \item $H_{0}$ is a { lower-bounded} self-adjoint operator on $\mathcal{H}$ with domain $D(H_{0})$ and $H_{1},\dotsc,H_{m}$ $(m\geq 2)$ are symmetric operators on $D(H_{0})$,
    \item $H_{0}$ is an operator with compact resolvent, i.e., $(H_{0}+i)^{-1}$ is a compact operator, and $H_{1},\dotsc,H_{m}$ are 
    Kato-small with respect to $H_0$, { i.e., for each $\ell\in\{1,\dots,m\}$, $D(H_0)\subset D(H_{\ell})$ and 
    for every $a>0$ there exists $b\ge 0$ 
such that $\|H_{\ell}v\|\leq a\|H_{0}v\|+b\|v\|$ for all $v\in D(H_{0})$
} (see \cite{kato1966perturbation}).
\end{itemize}
Under assumption \textbf{(H)} or $\textbf{(}\textbf{H}^{\infty}\textbf{)}$, the spectrum of $H(u)$ is discrete for every $u\in U$. 
Let $\mathcal{I}$ denote $\{1,\dotsc, \dim(\mathcal H)\}$ (respectively, $\mathbb{N}^{*}$) 
when \textbf{(H)} (respectively, $\textbf{(}\textbf{H}^{\infty}\textbf{)}$) is satisfied.
For every $u\in U$, we denote by $\{\lambda_{j}(u)\}_{j\in\mathcal{I}}$ the increasing sequence of eigenvalues of $H(u)$, counted according to their multiplicities, and by 
$\{\phi_{j}(u)\}_{j\in\mathcal{I}}$ a corresponding sequence of  eigenvectors  forming an othonormal basis of $\mathcal{H}$.

\section{Controllability of systems with weakly conically connected spectrum: previous results}
\label{s-contr}

Let us recall 
a classical definition of controllability for systems evolving 
in a finite-dimensional Hilbert space.

\begin{definition}[Controllability { on} the unitary group]\label{def:exact_controllability} When $n=\text{dim}(\mathcal{H})<\infty$, the \emph{lift of system~\eqref{system}} on $\mathcal{G}=U(n)$ (or on $\mathcal{G}=SU(n)$ in the case where $iH_{0},\dotsc,iH_{m}\in su(n)$) is defined as
\begin{equation}
\label{lifted_system}
    i\dot{q}(t)=H(u(t))q(t),
\end{equation}
where $q(\cdot)$ takes values in 
$\mathcal{G}$. 
{ System~\eqref{system} is \emph{exactly controllable { on} the unitary group} if its lift~\eqref{lifted_system} has the property that, for every choice of initial state $q_{0}$ and final state $q_{1}$ in $\mathcal{G}$, there exist $T>0$ and a piecewise constant control $u:[0,T]\rightarrow U$ for which the solution of system~\eqref{lifted_system} starting from $q(0)=q_{0}$ satisfies $q(T)=q_1$.}
\end{definition}
{ 
    \begin{remark}
    In Definition~\ref{def:exact_controllability}, we restrict ourselves to piecewise constant controls, as in~\cite{JURDJEVIC1972313} and~\cite{ELASSOUDI1996117}. All the following results on finite-dimensional systems will be stated with respect to this notion of controllability. However, the class of admissible controls can be extended to \(L^{\infty}\) functions, in which case all results clearly remain valid.
\end{remark}}


For systems evolving 
in 
an infinite-dimensional Hilbert space the natural counterpart of the previous definition is the following. 
\begin{definition}[Approximate controllability { on} the unitary group]
\label{def:controllability_infinite_unitary}
   Let $\textbf{(}\textbf{H}^{\infty}\textbf{)}$ be satisfied. 
   For every piecewise constant control $u:[0,T]\rightarrow U$,
    in the form of $u=\sum_{j=1}^{l}u_{j}1_{(t_{j-1},t_{j})}$ with $t_{0}=0$, let the \emph{propagator} of system~\eqref{system} associated with $u$ be given by
    \begin{multline*}
    \Upsilon_{t}^{u}=e^{i(t-t_{l})H(u_{l})}\circ e^{i(t_{l}-t_{l-1})H(u_{l-1})}\circ \dots\circ e^{it_{1}H(u_{1})}\qquad \mbox{for }t_{l}<t\leq t_{l+1}. 
    \end{multline*}
      We say that system~\eqref{system} is \emph{approximately controllable { on} the unitary group} if for every 
      $k\in \mathbb{N}$, $\Upsilon:\mathcal{H}\to\mathcal{H}$ unitary,  
      $\psi_{0},\dots, \psi_{k}$ in the unit sphere of $\mathcal{H}$, and every $\epsilon>0$, there exist a time $T\geq 0$ and a piecewise constant control function $u:[0,T]\rightarrow U$ such that $\|\Upsilon_{T}^{u}(\psi_{j})-\Upsilon(\psi_{j})\|<\epsilon$ for every $j=1,\dots,k$.
\end{definition}

\begin{remark}
    Notice that different authors use different names for the type of controllability defined in Definition  \ref{def:controllability_infinite_unitary}. For instance in \cite{Caponigro2012} it is called 
{\em simultaneous approximate controllability} and in \cite{KZSH14} it is called {\em strong controllability}.
\end{remark}

As explained in the introduction, this paper deals with the problem of understanding 
when the controllability (or approximate controllability) of the system corresponding to the Hamiltonian $H(\cdot,\cdot)$ 
implies that, fixing the control $u_{1}=\bar{u}_{1}$, the system with the Hamiltonian $H(\bar{u}_{1},\cdot)$ is still controllable (or approximately controllable) for almost every $\bar{u}_{1}$. 
Without additional hypotheses, this is not true in general. Consider, for instance, the system corresponding to the  Hamiltonian
\begin{equation*}
    H(u_1,u_2){ =H_0+u_1H_1+u_2H_2}=\begin{pmatrix}
        1 & u_2 & u_1 & 0  \\
        u_2 & 2 & 0 & 2u_1 \\
        u_1 & 0 & 3 & u_2 \\
        0 & 2u_1 & u_2 & 6 
    \end{pmatrix},
\end{equation*}
which is controllable { since $\text{Lie}(iH_0,iH_1,iH_2)$, which is the smallest Lie subalgebra of $u(4)$ containing $\{iH_0,iH_1,iH_2\}$, coincides with $u(4)$ (see \cite{JURDJEVIC1972313})}. However, for all $\bar{u}_1\in\mathbb{R}$, the system with  Hamiltonian $H(\bar{u}_1,\cdot)$ is not controllable. { 
Indeed, denoting by 
 $\{\textbf{e}_{1},\textbf{e}_2,\textbf{e}_3,\textbf{e}_4\}$ the canonical basis of $\mathbb{C}^{4}$,
  the two-dimensional subspace spanned by $\textbf{e}_1$ and $\textbf{e}_3$ (respectively, by $\textbf{e}_2$ and $\textbf{e}_4$) is invariant under $H(\bar{u}_1,\cdot)$. Notice that the spectrum of $H(u_1,u_2)$, as a function of $(u_1,u_2)\in\mathbb{R}^2$, is never connected by conical or weakly conical intersections (see Definitions~\ref{def:conical} and \ref{def:weakly_conical_intersection}).} In the following, we will prove that, if the spectrum of $H(\cdot,\cdot)$ is weakly conically connected and has rationally unrelated germs, we can deduce controllability results not only for the two-input system but also for the single-input system with the Hamiltonian $H(\bar{u}_{1},\cdot)$ for almost every $\bar{u}_{1}$. 

\begin{definition}[Conical intersection]\label{def:conical}
Let $\mathcal I^-=\mathcal I\setminus\{\dim \mathcal{H}\}$ (that is, $\mathcal I^-=\{1,\dots,\dim \mathcal{H}-1\}$ if $\dim \mathcal{H}<\infty$ and 
$\mathcal I^-=\mathcal I$ otherwise).
Given $j\in \mathcal I^-$, an eigenvalue intersection  $\bar{u}\in U$ between two eigenvalues $\lambda_{j}(\cdot)$ and $\lambda_{j+1}(\cdot)$
     is said to be \emph{conical} if it is of multiplicity two and there exist $C,\delta>0$ such that
    \begin{equation*}
        \frac{1}{C}|t|\leq|\lambda_{j+1}(\bar{u}+t\eta)-\lambda_{j}(\bar{u}+t\eta)|\leq C|t|
    \end{equation*}
    for every unit direction $\eta\in\mathbb{R}^{m}$ and $t\in (-\delta,\delta)$.
\end{definition}
See Figure \ref{fig:conical} for an example of conical intersection.
\begin{definition}[Conical direction]\label{def:conical_direction}
Given $j\in \mathcal I^-$ and an intersection $\bar{u}$ between $\lambda_{j}(\cdot)$ and $\lambda_{j+1}(\cdot)$, a unit vector $\eta\in\mathbb{R}^{m}$ is said to be a \emph{conical direction at $\bar u$} if there exist $C,\delta>0$ such that
    \begin{equation*}
        \frac{1}{C}|t|\leq \lambda_{j+1}(\bar{u}+t\eta)-\lambda_{j}(\bar{u}+t\eta)\leq C|t|
    \end{equation*}
    for $t\in (-\delta,\delta)$.
\end{definition}

Let us now recall the notion of weakly conical intersection proposed in \cite{liang:hal-04174206}, which generalizes the notion of semi-conical intersection considered in \cite{semiconical}.

\begin{definition}[Weakly conical intersection]\label{def:weakly_conical_intersection}
A non-conical eigenvalue intersection $\bar{u}\in U$ between two eigenvalues $\lambda_{j}(\cdot)$ and $\lambda_{j+1}(\cdot)$, $j\in \mathcal I^-$,  is said to be \emph{weakly conical} if 
it is of multiplicity two,
$\bar{u}$ is an isolated point in \begin{equation*}
    \left\{u\in U\mid\lambda_{j}(u)=\lambda_{j+1}(u)\right\},
\end{equation*}
and there exists at least one conical direction at $\bar{u}$.
\end{definition}
See Figure \ref{fig:weakly_conical} for an example of weakly conical intersection. 
\begin{figure}[!t]
\centering
     \begin{subfigure}[b]
     {0.45\columnwidth}
         \centering
         \includegraphics[width=\columnwidth]{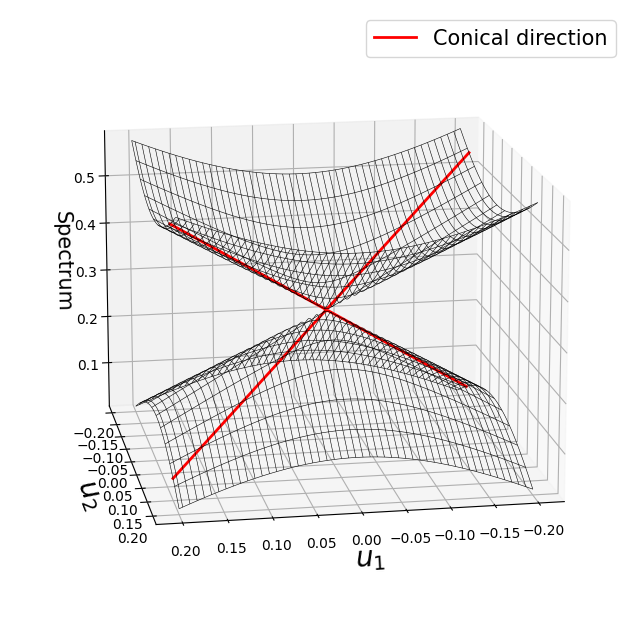}
         \caption{A conical intersection}
         \label{fig:conical}
     \end{subfigure}
     \hfill
     \begin{subfigure}[b]{0.45\columnwidth}
    
         \centering
         \includegraphics[width=\columnwidth]{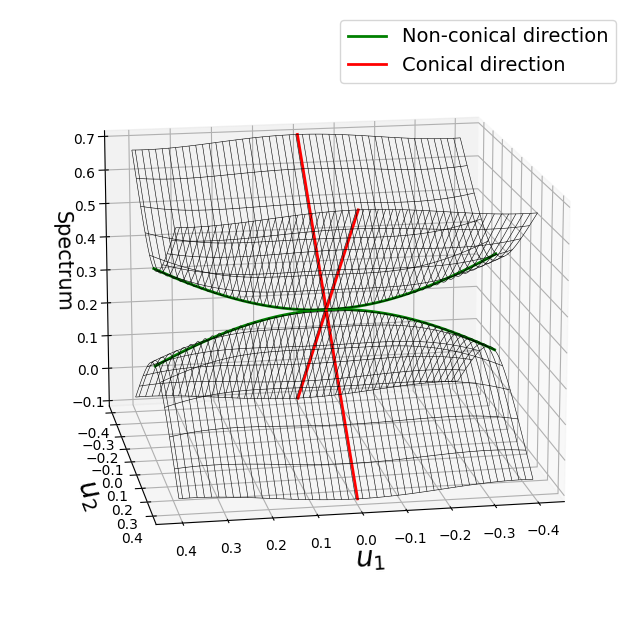}
         \caption{A weakly conical intersection}
         \label{fig:weakly_conical}
     \end{subfigure}
   \label{Fig:example_1}
   \caption{Intersections between eigenvalues of $H(u)$ seen as functions of $u=(u_1,u_2)$, with $m=2$. For the conical intersection, the red lines show the separation of the eigenvalues along a conical direction. For the weakly conical intersection, the red lines show the separation
   of the eigenvalues
   along a conical direction and the green lines show the separation along a non-conical direction. }
\end{figure}

\begin{definition}[Weakly conical connected spectrum]
 Let \textbf{(H)} or $\textbf{(}\textbf{H}^{\infty}\textbf{)}$ be satisfied. We say that 
the spectrum of $H(\cdot)$ is \emph{weakly conically connected} if each of its eigenvalue intersections is either conical or weakly conical, and, moreover, for every $j\in \mathcal I^-$, there exists $\bar{u}_{j}\in U$ such that $\lambda_{j}(\bar{u}_{j})= \lambda_{j+1}(\bar{u}_{j})$.   
\end{definition}

\begin{definition}[Rationally unrelated germs]\label{def:unrelated_germs}
    Let \textbf{(H)} or $\textbf{(}\textbf{H}^{\infty}\textbf{)}$ be satisfied and the spectrum of $H(\cdot)$ be weakly conically connected. 
    For a given 
intersection point $\bar{u}\in U$ of the spectrum of $H(\cdot)$, define 
\begin{equation}\label{eq:I_u}
    \mathcal{I}(\bar{u}):=\{j\in \mathcal{I}^- \mid \lambda_{j}(\bar{u})=\lambda_{j+1}(\bar{u})\}.
\end{equation}
We say that \emph{intersections have rationally unrelated germs at} $\bar{u}$ if for every finite subset $\mathcal{J}$ of $\mathcal{I}(\bar{u})$, and for every neighborhood $V$ of $\bar{u}$, the family of functions 
\begin{equation*}
    \Big(V\ni u\mapsto \lambda_{j+1}(u)-\lambda_{j}(u)\Big)_{j\in\mathcal{J}}
\end{equation*}
is rationally independent, meaning that if 
$\{\alpha_{j}\}_{j\in\mathcal J}\in\mathbb{Q}^{\mathcal{J}}$ is such that 
\begin{equation*}
    \sum_{j\in\mathcal{J}}\alpha_{j}(\lambda_{j+1}(u)-\lambda_{j}(u))=0,\quad \forall u\in V,
\end{equation*}
then $\alpha_{j}=0$ for all $j\in\mathcal{J}$.
\end{definition}
Let us stress that in \cite[Section 4]{liang:hal-04174206} we proposed constructive tests for checking that an eigenvalue intersection has rationally unrelated germs. 
The following spectral conditions for controllability have been proven in \cite{liang:hal-04174206}.
\begin{theorem}[Theorem 1.1 in \cite{liang:hal-04174206}]\label{exact_controllability}
 Assume that System~\eqref{system} satisfies \textbf{(H)}, that the spectrum of $H(\cdot)$ is weakly conically connected, and that
 its eigenvalue intersections have rationally unrelated germs at each intersection point. Then $\text{Lie}(iH_{0},\dotsc,iH_{m})=su(n)$ if $iH_{0},\dotsc,iH_{m}\in su(n)$ and $\text{Lie}(iH_{0},\dotsc,iH_{m})=u(n)$ otherwise, meaning that  System~\eqref{system} is exactly controllable { on} the unitary group.
\end{theorem}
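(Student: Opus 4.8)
The strategy is to exploit the weakly conically connected structure of the spectrum to produce, for each $j\in\mathcal I^-$, a rotation in the $(\phi_j,\phi_{j+1})$-plane inside the reachable group, and then to combine these with diagonal phase rotations to generate the whole of $u(n)$ (or $su(n)$). Concretely, I would first invoke the adiabatic theorem with eigenvalue crossings: fixing $j$ and an intersection point $\bar u_j$ at which $\lambda_j$ and $\lambda_{j+1}$ meet along a conical direction, one designs a slow closed loop of controls passing through $\bar u_j$ transversally along that conical direction. By the adiabatic principle for conical (or weakly conical along one direction) intersections — this is exactly the mechanism recalled from \cite{boscain2015approximate,liang:hal-04174206} — such a loop transfers population between $\phi_j$ and $\phi_{j+1}$ while leaving the other levels essentially untouched, up to controllable phases. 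Taking the limit of infinitely slow loops, the corresponding propagators accumulate in the closure of the reachable set, and one reads off that the block-diagonal unitary acting as a nontrivial rotation on $\mathrm{span}(\phi_j,\phi_{j+1})$ and as a phase elsewhere lies in the closed reachable group.

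The second ingredient is the control of the accompanying dynamical and geometric phases. The difficulty here is that a single adiabatic loop produces a rotation in $\mathrm{span}(\phi_j,\phi_{j+1})$ dressed by uncontrolled phases $e^{i\theta_k}$ on the remaining eigenlines, coming from the integral of $\lambda_k$ along the loop. This is precisely where the \emph{rationally unrelated germs} hypothesis enters: by varying the shape and duration of the loops one modulates the phases $\theta_k$, and rational independence of the germs $\lambda_{j+1}-\lambda_j$ guarantees that the set of attainable phase vectors is dense in the appropriate torus, so that one can asymptotically cancel the unwanted phases and isolate a clean $SU(2)$ action on each consecutive pair. Combined with the freedom to implement arbitrary diagonal unitaries $\exp(i\,\mathrm{diag}(t_1,\dots,t_n))$ by flowing with $H(u)$ at a non-crossing value of $u$ for suitable times (again using rational independence of the $\lambda_k$'s at a generic point, or an approximation argument), one obtains in the closed reachable group all rotations in consecutive coordinate planes together with all diagonal phases.

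The final step is purely group-theoretic: the subgroup of $U(n)$ generated by the $SU(2)$ blocks acting on $\mathrm{span}(\phi_j,\phi_{j+1})$ for $j=1,\dots,n-1$, together with all diagonal unitaries, is $U(n)$ itself (and its traceless-generator analogue gives $SU(n)$ when $iH_0,\dots,iH_m\in su(n)$); this is the standard fact that nearest-neighbour couplings plus phases generate the full unitary group, since the corresponding Lie algebra contains all $E_{j,j+1}-E_{j+1,j}$, $i(E_{j,j+1}+E_{j+1,j})$ and all $iE_{kk}$, whose iterated brackets span $u(n)$. Since the reachable group is closed and contains a generating set of $U(n)$ (resp.\ $SU(n)$), it equals $U(n)$ (resp.\ $SU(n)$), which by the Lie algebra rank condition of \cite{JURDJEVIC1972313} is equivalent to $\mathrm{Lie}(iH_0,\dots,iH_m)=u(n)$ (resp.\ $su(n)$), i.e.\ exact controllability on the unitary group.

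I expect the main obstacle to be the second step: making rigorous the claim that, by choosing the loops appropriately, the uncontrolled dynamical phases on the spectator eigenlines can be driven arbitrarily close to any prescribed values. This requires a careful quantitative version of the adiabatic approximation near a (weakly) conical crossing — controlling simultaneously the transfer error in the active block and the phase errors elsewhere, uniformly as the adiabatic parameter tends to zero — together with a simultaneous-Diophantine-approximation argument fed by the rationally-unrelated-germs assumption. In the write-up I would isolate the phase-adjustment construction as a separate lemma and rely on the adiabatic estimates already established in \cite{liang:hal-04174206}.
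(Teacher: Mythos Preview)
This theorem is not proved in the present paper; it is quoted verbatim from \cite{liang:hal-04174206} and only used as a black box. So there is no ``paper's own proof'' to compare against here. That said, the surrounding machinery that \emph{is} developed in this paper --- Lemma~\ref{lemma:one_input_coupling} (nonvanishing couplings $\langle\phi_j,H_2\phi_{j+1}\rangle$ at generic $u$) and Lemma~\ref{lemma:non_resonance_finite} (rational independence of the $\lambda_j(u)$ at generic $u$) --- makes it clear that the argument in \cite{liang:hal-04174206} is algebraic/spectral rather than adiabatic: one finds a single $\bar u$ at which the spectrum is simple and non-resonant and the chain of nearest-neighbour couplings is nonzero, and then invokes the standard Lie-algebraic criterion (as in \cite[Propositions~11 and~15]{boscain2015approximate}). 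The conical structure is used only to show that the coupling $P_j(u)H_\ell P_{j+1}(u)$ cannot vanish identically (this is exactly the contradiction argument in Lemma~\ref{lemma:one_input_coupling}), and the rationally-unrelated-germs hypothesis is used only to feed Lemma~\ref{lemma:non_resonance_finite}. No adiabatic limits, no loop constructions, no phase bookkeeping.

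Your adiabatic route is a genuinely different approach, and in outline it can be made to work, but there is a real gap in your second step. The rationally-unrelated-germs condition (Definition~\ref{def:unrelated_germs}) is a statement about the local rational independence of the \emph{gap functions} $u\mapsto\lambda_{j+1}(u)-\lambda_j(u)$ for $j\in\mathcal I(\bar u)$, near a given intersection point $\bar u$. It says nothing directly about the spectator eigenvalues $\lambda_k$ with $k\notin\{j,j+1\}$, and hence nothing directly about the dynamical phases $\int\lambda_k(\gamma(t))\,dt$ you want to cancel. To turn it into a phase-adjustment tool you must first pass through Lemma~\ref{lemma:non_resonance_finite}, which converts the germs hypothesis into genuine rational independence of \emph{all} the $\lambda_j(\bar u)$ at a generic $\bar u$; only then can you use free evolution at such a $\bar u$ to hit a dense subset of the diagonal torus. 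But once you have Lemma~\ref{lemma:non_resonance_finite} and a coupling lemma in hand, the adiabatic machinery becomes superfluous: non-resonance plus a connected coupling graph already gives $\mathrm{Lie}(iH_0,\dots,iH_m)=u(n)$ by the classical criterion, with no need to construct slow loops or control adiabatic errors. So your plan, once the gap is patched, collapses to the spectral argument anyway.
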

\begin{theorem}[Theorem 3.8 and Remark 3.9 in \cite{liang:hal-04174206}]\label{theorem:approximate_controllability}
    Assume that System~\eqref{system} satisfies $\textbf{(}\textbf{H}^{\infty}\textbf{)}$, that the spectrum of $H(\cdot)$ is weakly conically connected, and that its eigenvalue intersections have rationally unrelated germs at each intersection point. Then System~\eqref{system} is approximately controllable { on} the unitary group.
\end{theorem}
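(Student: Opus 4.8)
The plan is to deduce the statement from a classical single‑input controllability test, whose hypotheses I will check for almost every value of the frozen parameter by combining real‑analyticity with the (weakly) conical structure of the spectrum. Since replacing $u_2$ by $u_2+c$ leaves the family of Hamiltonians $\{H(\bar u_1,u_2)\mid(\bar u_1,u_2)\in U\}$ unchanged, controllability of the frozen single‑input system depends only on $\bar u_1$, and on a compact group it amounts to $\text{Lie}(iH(\bar u_1,\bar u_2),iH_2)=u(n)$ (or $su(n)$), resp.\ to the analogous spectral condition in the infinite‑dimensional case, for one (equivalently any) admissible $\bar u_2$. By Fubini's theorem it thus suffices to prove that for almost every $u=(u_1,u_2)\in U$ the single‑input system with drift $H(u)$ and control operator $H_2$ is exactly (resp.\ approximately, on the unitary group) controllable. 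For this I invoke the classical sufficient condition (which in finite dimension follows from the Lie algebra rank condition \cite{JURDJEVIC1972313}, and in general from the spectral analysis of \cite{Caponigro2012,chambrionautomatica}): if the drift $A$ has simple discrete spectrum $\{\mu_k\}_{k\in\mathcal I}$ with eigenbasis $\{\chi_k\}_{k\in\mathcal I}$, and there is a family $\mathcal S$ of pairs of indices connecting all levels such that $\langle\chi_j,H_2\chi_k\rangle\neq0$ for $\{j,k\}\in\mathcal S$, and $|\mu_j-\mu_k|\neq|\mu_p-\mu_q|$ for every $\{j,k\}\in\mathcal S$ and every $\{p,q\}\neq\{j,k\}$ with $\langle\chi_p,H_2\chi_q\rangle\neq0$, then the system is controllable. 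Taking $A=H(u)$ and the chain $\mathcal S_0=\{\{j,j+1\}\mid j\in\mathcal I^-\}$ of consecutive levels, it remains to show that, for a.e.\ $u\in U$, $H(u)$ has simple spectrum and $\mathcal S_0$ has these two properties.

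Since $u\mapsto H(u)=H_0+u_1H_1+u_2H_2$ is affine, hence real‑analytic, on the open set $\Omega\subseteq U$ where the spectrum of $H(\cdot)$ is simple the eigenvalues $\lambda_j(\cdot)$ and the quantities $u\mapsto|\langle\phi_j(u),H_2\phi_k(u)\rangle|^2=\mathrm{tr}\bigl(P_j(u)H_2P_k(u)H_2\bigr)$ ($P_j(u)$ being the $j$‑th spectral projector) are real‑analytic. Because the spectrum of $H(\cdot)$ is weakly conically connected, each point at which two consecutive eigenvalues coincide is a conical or a weakly conical intersection, hence isolated in the corresponding coincidence set; therefore $U\setminus\Omega=\bigcup_{j\in\mathcal I^-}\{u\mid\lambda_j(u)=\lambda_{j+1}(u)\}$ is a countable union of closed, locally finite sets, so it is Lebesgue‑negligible, and $\Omega$, the complement of such a set in the connected open set $U\subseteq\mathbb R^2$, is connected (the infinite‑dimensional technicalities are handled as in \cite{liang:hal-04174206}). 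A nonzero real‑analytic function on a connected set has a negligible zero set, so it only remains to prove that the relevant real‑analytic functions do not vanish identically on $\Omega$.

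For the non‑resonance requirement, suppose $\lambda_{j+1}-\lambda_j\equiv|\lambda_p-\lambda_q|$ on $\Omega$ for some $j\in\mathcal I^-$ and some $\{p,q\}\neq\{j,j+1\}$. Writing $\delta_i:=\lambda_{i+1}-\lambda_i$ (so $\delta_i>0$ on $\Omega$) and telescoping, this reads $\delta_j\equiv\sum_{i=p'}^{q'-1}\delta_i$ with $p'=\min(p,q)$, $q'=\max(p,q)$. Let $\bar u$ be an intersection point of $\lambda_j$ and $\lambda_{j+1}$, which exists because the spectrum is weakly conically connected. At $\bar u$ the left‑hand side vanishes, hence each non‑negative summand $\delta_i(\bar u)$, $p'\le i\le q'-1$, vanishes, so $j$ and all these $i$'s belong to $\mathcal I(\bar u)$. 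If $j\notin\{p',\dots,q'-1\}$, then $\delta_j-\sum_{i=p'}^{q'-1}\delta_i\equiv0$ is a non‑trivial rational relation among $(\delta_i)_{i\in\mathcal J}$ for the finite set $\mathcal J=\{j\}\cup\{p',\dots,q'-1\}\subseteq\mathcal I(\bar u)$; if $j\in\{p',\dots,q'-1\}$, the relation forces $\sum_{i\in\{p',\dots,q'-1\}\setminus\{j\}}\delta_i\equiv0$, impossible on $\Omega$ (where all $\delta_i>0$) unless $\{p,q\}=\{j,j+1\}$. Either way this contradicts the rationally‑unrelated‑germs condition at $\bar u$ (Definition~\ref{def:unrelated_germs}). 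Hence, for every such $j$ and $\{p,q\}$, the set $\{u\mid\delta_j(u)=|\lambda_p(u)-\lambda_q(u)|\}$ is negligible, and the non‑resonance property of $\mathcal S_0$ holds for a.e.\ $u$.

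The remaining, and main, point is to show that for each $j\in\mathcal I^-$ the coupling $u\mapsto\langle\phi_j(u),H_2\phi_{j+1}(u)\rangle$ does not vanish identically on $\Omega$; equivalently, the maximal coupling graph $\mathcal G^*$, with an edge $\{j,k\}$ whenever $|\langle\phi_j(\cdot),H_2\phi_k(\cdot)\rangle|^2\not\equiv0$ on $\Omega$, is connected. If $\mathcal G^*$ were disconnected, with $\mathcal I=S\sqcup S^c$ and no edge between the parts, then $\mathrm{span}\{\phi_i(u)\mid i\in S\}$ would be $H_2$‑invariant and $H(u)$‑invariant for all $u$; it is then invariant under $H(u_1,u_2+s)$ for all $s$, hence, $H(u_1,\cdot)$ having simple spectrum, a fixed sum of its eigenlines along each component of $\{u_2\mid(u_1,u_2)\in\Omega\}$, so that for a.e.\ $u_1$ it would be a proper subspace invariant under both $H_0+u_1H_1$ and $H_2$ — in particular the frozen system would be uncontrollable for a.e.\ $u_1$, which shows that connectedness of $\mathcal G^*$ is in fact necessary for the theorem. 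Pick a consecutive pair $j_0\in S$, $j_0+1\in S^c$ separated by the partition, an intersection point $\bar u_{j_0}$ of $\lambda_{j_0},\lambda_{j_0+1}$, and let $V$ be the associated two‑dimensional eigenspace of $H(\bar u_{j_0})$. Expanding $H$ near $\bar u_{j_0}$, the crossing and the eigenvectors $\phi_{j_0},\phi_{j_0+1}$ are governed to leading order by the restrictions $H_1|_V$, $H_2|_V$; if the crossing is conical (Definition~\ref{def:conical}), the linear map sending $v\in\mathbb R^2$ to the traceless part of the restriction of $v_1H_1+v_2H_2$ to $V$ is injective into the three‑dimensional space of traceless Hermitian $2\times2$ matrices, so the traceless parts of $H_1|_V$ and $H_2|_V$ are linearly independent, hence do not commute; consequently, for $u$ near $\bar u_{j_0}$ along the $u_1$‑axis, $\{\phi_{j_0}(u),\phi_{j_0+1}(u)\}$ (which then approximates an eigenbasis of the traceless part of $H_1|_V$) is not an eigenbasis of $H_2|_V$, so $\langle\phi_{j_0}(u),H_2\phi_{j_0+1}(u)\rangle\neq0$ there, contradicting the absence of the edge $\{j_0,j_0+1\}$. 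This disposes of the case in which the separated pair admits a conical intersection. The delicate case — and the step I expect to be the main obstacle — is when all intersections of the separated pair are weakly conical with a conical direction lying ``along $H_1$'', so that $H_2|_V$ commutes with $H_1|_V$ (possibly being scalar) and the leading‑order coupling vanishes; there one must extract the contradiction from the finer structure of weakly conical intersections (Definition~\ref{def:weakly_conical_intersection}), the rationally‑unrelated‑germs hypothesis, and the two‑input controllability provided by Theorem~\ref{exact_controllability}, for instance by analysing the first‑order mixing of $\phi_{j_0},\phi_{j_0+1}$ with the other levels near $\bar u_{j_0}$, or by combining the conical argument over several intersections of pairs separated by the partition. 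Once $\mathcal G^*$ is known to be connected, $\mathcal S_0$ is for a.e.\ $u$ a non‑degenerate connectedness chain over a simple drift spectrum, the single‑input criterion applies, and the theorem follows via the reduction of the first paragraph.
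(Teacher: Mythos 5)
Your strategy does not follow the proof of this statement given in \cite{liang:hal-04174206} (an adiabatic passage through the eigenvalue intersections along conical directions, which uses both controls as time-dependent inputs); instead you aim at the strictly stronger assertion that for almost every frozen value of one control the resulting single-input system is already controllable --- that is, essentially Theorem~\ref{theorem:main} of the present paper --- and then specialize. The reduction is legitimate, and your treatment of the simplicity of the spectrum off a negligible set and of the non-resonance of the consecutive gaps (telescoping $\lambda_q-\lambda_p$ into consecutive gaps, evaluating at an intersection point of $\lambda_j,\lambda_{j+1}$ to force all intermediate gaps into $\mathcal I(\bar u)$, and invoking Definition~\ref{def:unrelated_germs}) is sound and is essentially the content of Lemma~\ref{lemma:non_resonance_finite}.

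The proof is, however, incomplete at exactly the point you flag yourself: the non-vanishing of $u\mapsto\langle\phi_j(u),H_2\phi_{j+1}(u)\rangle$ when the pair $(\lambda_j,\lambda_{j+1})$ admits only weakly conical intersections. Your first-order perturbation argument works only at a conical intersection, because conicality in \emph{every} direction is what forces the traceless parts of $H_1|_V$ and $H_2|_V$ to be linearly independent; at a weakly conical intersection whose conical direction is $(1,0)$, the traceless part of $H_2|_V$ may vanish, the leading-order coupling is zero, and no analysis localized to first order at that point can conclude. Since the whole purpose of the weakly-conical hypothesis is to cover precisely this situation, this is not a corner case but the heart of the matter. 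The paper closes this gap in Lemma~\ref{lemma:one_input_coupling} by a non-perturbative argument: assuming $P_jH_2P_{j+1}\equiv0$, it shows that the intertwiner $V$ defined by the transport equation~\eqref{eq:ODE} along vertical segments conjugates each individual eigenprojection $P_j$, $P_{j+1}$ (not merely their sum $P$) to its value at the segment's endpoint, extends this conjugation by continuity across the intersection point, and contradicts the discontinuity of the ordered eigenprojections that must occur when crossing the intersection along a conical direction. Nothing in your proposal substitutes for this step. A secondary inaccuracy: connectedness of your maximal coupling graph $\mathcal G^*$ is not equivalent to the non-vanishing of all consecutive couplings, and if connectedness were achieved only through non-consecutive edges you would have to re-establish non-resonance for that chain, for which your telescoping argument (which relies on intersection points of \emph{consecutive} eigenvalues) no longer applies.
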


\section{Main result}
\label{s-main}
From now on, let us fix $m=2$. For every possible static value $u_1=\bar{u}_{1}$ in { 
\begin{equation}\label{equation:interval_u1}
I^{1}=\pi_{1}(U),
\end{equation}
where $\pi_{1}$ is the standard projection on the first component,} let us consider the single-input system
\begin{equation}
\label{one_input_system}
    i\dot{\psi}(t)=H(\bar{u}_{1},u_{2}(t))\psi(t)=\Big(H_{0}+\bar{u}_{1}H_{1}+u_{2}(t)H_{2}\Big)\psi(t),
\end{equation}
where $u_2(\cdot)$ takes value in the set $\{u_2\mid (\bar{u}_1,u_2)\in U\}$.  Notice that for each possible static value $\bar{u}_{1}$, $u_{2}(\cdot)$ takes value in an open nonempty set, since $U$ is open in $\mathbb{R}^{2}$. 

Under the 
assumptions of Theorems~\ref{exact_controllability} and \ref{theorem:approximate_controllability}, we can further deduce the controllability of the single-input system with Hamiltonian $H(\bar{u}_{1},\cdot)$.
\begin{theorem}\label{theorem:main}
    Assume that $m=2$, that System~\eqref{system} satisfies \textbf{(H)} (respectively, $\textbf{(}\textbf{H}^{\infty}\textbf{)}$), that the spectrum of $H(\cdot)$ is weakly conically connected, and that its eigenvalue intersections have rationally unrelated germs at each intersection point. Then, for almost every $\bar{u}_{1}\in I^{1}$ defined in \eqref{equation:interval_u1}, the single-input system~\eqref{one_input_system} 
    is controllable (respectively, approximately controllable) { on} the unitary group.
\end{theorem}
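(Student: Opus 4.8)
The plan is to reduce the controllability of the single-input system~\eqref{one_input_system} to a classical spectral (graph-connectedness) sufficient condition for the pair consisting of the drift $H(\bar u_1,u_2^0)$ and the control operator $H_2$, and then to verify that condition for almost every $\bar u_1$ by exploiting the conical structure of the two-input spectrum. For a fixed $\bar u_1\in I^1$ and any admissible control value $u_2^0$ one has $\mathrm{Lie}(iH(\bar u_1,u_2^0),iH_2)=\mathrm{Lie}(i(H_0+\bar u_1H_1),iH_2)=:\mathfrak g(\bar u_1)$, independent of $u_2^0$ because $u_2$ ranges over an open interval, so that $iH_2\in\mathfrak g(\bar u_1)$. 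In the finite-dimensional case, exact controllability of~\eqref{one_input_system} on $\mathcal G$ is equivalent to $\mathfrak g(\bar u_1)$ being $su(n)$ or $u(n)$ (Lie algebra rank condition on the compact group $\mathcal G$, cf.~\cite{JURDJEVIC1972313}); a sufficient condition for this is that the spectrum of $H(\bar u_1,u_2^0)$ be simple, that its transition frequencies $\lambda_k-\lambda_\ell$ be pairwise distinct, and that the graph on $\mathcal I$ with edges $\{(j,k):\langle\phi_j,H_2\phi_k\rangle\neq0\}$ be connected (see, e.g., \cite{Caponigro2012,chambrionautomatica}). In the infinite-dimensional case one uses instead the analogous criterion with \emph{connected graph} replaced by \emph{non-resonant chain of connectedness}, yielding approximate controllability on the unitary group; here $H_0+\bar u_1H_1$ is self-adjoint with compact resolvent (Kato--Rellich, using that $H_1$ is Kato-small relative to $H_0$), hence has discrete spectrum, so the criterion applies. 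It thus suffices to exhibit, for a.e.\ $\bar u_1$, an admissible $u_2^0$ satisfying these spectral requirements.

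The first and easy point is the simplicity of the drift spectrum for a.e.\ $\bar u_1$. By the definition of a weakly conically connected spectrum (Definitions~\ref{def:conical} and~\ref{def:weakly_conical_intersection}), every eigenvalue intersection of $H(\cdot)$ has multiplicity two and is an isolated point of the set where the two coinciding eigenvalues agree; hence the degeneracy locus $\Sigma:=\bigcup_{j\in\mathcal I^-}\{u\in U:\lambda_j(u)=\lambda_{j+1}(u)\}$ is a countable union of discrete sets, thus countable, and $U\setminus\Sigma$ is connected. Therefore $\pi_1(\Sigma)\subset I^1$ has zero Lebesgue measure, and for every $\bar u_1\in I^1\setminus\pi_1(\Sigma)$ the operator $H(\bar u_1,u_2)$ has simple spectrum for every admissible $u_2$.

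The core of the proof is the connectedness of the coupling graph, which is where the conical hypothesis really enters. Fix $j\in\mathcal I^-$ and let $\bar u_j\in U$ be an intersection point of $\lambda_j$ and $\lambda_{j+1}$ (one exists by weak conical connectedness). I would show that the real-analytic function $u\mapsto|\langle\phi_j(u),H_2\phi_{j+1}(u)\rangle|^2$ on $U\setminus\Sigma$ is not identically zero. Near $\bar u_j$ the rank-two spectral projection $P_j(u)$ onto $\{\lambda_j(u),\lambda_{j+1}(u)\}$ is analytic, and in an analytic orthonormal frame of its range one has $P_jHP_j=\mu P_j+\vec d(u)\cdot\vec\sigma$ and $P_jH_2P_j=\nu P_j+\vec c(u)\cdot\vec\sigma$ with $\mu,\nu$ scalar, $\vec d,\vec c$ analytic and $\vec d(\bar u_j)=0$; the vanishing $\langle\phi_j,H_2\phi_{j+1}\rangle\equiv0$ would force $\vec c(u)\parallel\vec d(u)$ everywhere, which contradicts the existence of a conical direction at $\bar u_j$ together with the isolation of $\bar u_j$ in $\{\vec d=0\}$: for a conical intersection the image of $D\vec d(\bar u_j)$ is a plane, so the direction of $\vec d(u)$ sweeps a circle near $\bar u_j$ and cannot stay parallel to $\vec c(u)\to\vec c(\bar u_j)$; for a weakly conical intersection one argues similarly, the degenerate cases $\vec c(\bar u_j)=0$ and $\vec d$ pointing along a fixed direction being precisely those that would produce a vertical segment or a curve inside $\{\lambda_j=\lambda_{j+1}\}$, against isolation. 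Granting this, $\{\langle\phi_j,H_2\phi_{j+1}\rangle=0\}$ is a proper analytic subset of $U\setminus\Sigma$, hence Lebesgue-null in $U$; by Fubini, for a.e.\ $\bar u_1$ its slice in the $u_2$ variable is null. Intersecting over the countable index set $\mathcal I^-$, and combining with an analogous genericity argument for the transition frequencies $\lambda_k-\lambda_\ell$ (generically pairwise distinct; see the next paragraph), one obtains: for a.e.\ $\bar u_1\in I^1$ there is an admissible $u_2^0$ such that $H(\bar u_1,u_2^0)$ has simple spectrum with pairwise distinct transition frequencies and $H_2$ couples all consecutive levels, so that its coupling graph is connected (with a non-resonant chain of connectedness in the infinite-dimensional setting). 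The spectral sufficient condition then gives $\mathfrak g(\bar u_1)=\mathrm{Lie}(iH(\bar u_1,u_2^0),iH_2)=su(n)$ or $u(n)$ (respectively, approximate controllability on the unitary group), which is the assertion.

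The step I expect to be the main obstacle is the non-vanishing of $\langle\phi_j,H_2\phi_{j+1}\rangle$ near a weakly conical intersection: there $D\vec d(\bar u_j)$ has rank one, the first-order analysis is inconclusive, and one must descend carefully to higher order in the Taylor expansion of $\vec d$, using that $\bar u_j$ is isolated in $\{\vec d=0\}$; this is a close relative of the non-degeneracy lemmas already used for the two-input result in \cite{liang:hal-04174206} (and, for semiconical intersections, in \cite{semiconical,boscain2015approximate}), but it must be re-examined here with $H_2$ alone playing the role of the perturbation. A secondary delicate point is excluding rigid resonances among transition frequencies (which, rewritten in terms of consecutive gaps, reduce to integer linear relations among the $\lambda_{j+1}-\lambda_j$, to be ruled out via the rationally-unrelated-germs hypothesis, Definition~\ref{def:unrelated_germs}) so that the clean connectedness criterion applies; should such resonances persist, one would instead invoke a spectral controllability criterion that tolerates resonances away from a fixed non-resonant chain.
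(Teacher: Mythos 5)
Your overall architecture matches the paper's: reduce to the classical spectral sufficient condition (simple spectrum, non-resonant transition frequencies, $H_2$ coupling all consecutive levels, i.e.\ \cite[Propositions 11 and 15]{boscain2015approximate}), establish each property on a full-measure subset of $U$ by analyticity of the relevant quantities, and push forward to almost every $\bar u_1$ by Fubini. The non-resonance part is indeed handled through the rationally-unrelated-germs hypothesis (this is Lemma~\ref{lemma:non_resonance_finite}, imported from \cite{liang:hal-04174206}), and your simplicity and projection steps are fine.

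The gap is exactly where you predicted it: the proof that $u\mapsto P_j(u)H_2P_{j+1}(u)$ is not identically zero. Your two-by-two reduction $PHP=\mu P+\vec d\cdot\vec\sigma$, $PH_2P=\nu P+\vec c\cdot\vec\sigma$ correctly translates the vanishing of the coupling into $\vec c(u)\parallel\vec d(u)$, and your ``direction of $\vec d$ sweeps a circle'' argument works only for a genuinely conical intersection with $\vec c(\bar u_j)\neq0$. In the two remaining cases, your proposed way out --- that $\vec c(\bar u_j)=0$, or $D\vec d(\bar u_j)$ of rank one, ``would produce a curve inside $\{\lambda_j=\lambda_{j+1}\}$, against isolation'' --- is not a valid deduction: for instance $\vec d(u)=\vec c(u)=(u_1,u_2,0)$ satisfies parallelism with an isolated zero, so parallelism plus isolation alone yield no contradiction; the obstruction must come from the dynamical link between $\vec c$ and the $u_2$-derivative of the eigenframe, which your formulation discards. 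This matters because the weakly conical (rank-one) case is precisely the regime the theorem is designed to cover. The paper closes the gap differently: from $P_jH_2P_{j+1}\equiv0$ it deduces $\dot P_jP_{j+1}=\tfrac{1}{\lambda_j-\lambda_{j+1}}P_jH_2P_{j+1}=0$ along every vertical ($u_2$-)segment, so the transport unitary $V$ solving $\dot V=[\dot P,P]V$ --- continuous on a full square around the intersection point because the rank-two projection $P$ is regular there --- conjugates $P_j$ and $P_{j+1}$ to constants along each vertical segment; hence $P_s$ extends continuously through $\bar u_j$. This contradicts the fact that along any single conical direction the analytic eigenprojection branches swap, so $P_s(\Gamma(0^-))\neq P_s(\Gamma(0^+))$. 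Only one conical direction is needed, which is why that argument covers weakly conical intersections uniformly. To complete your proof you would need either this transport argument or a genuine higher-order analysis of $\vec d$ at rank-one intersections, which you have not supplied.
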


{ 
Notice that throughout this paper (including the statement above), we work solely with the \emph{Lebesgue measure}.}

{ Let $\mathcal{I}$ and $\mathcal{I}^-$ be defined as in Section~\ref{s-ass}}. 
For $u\in U$  and $j\in \mathcal{I}$ such that  $\lambda_{j}(u)$ is simple,
denote by 
$P_{j}(u)$ the associated eigenprojection
on the space spanned by the eigenvector $\phi_{j}(u)$. { 
The proof of Theorem~\ref{theorem:main} is structured as follows:
\begin{itemize}
\item  We first prove the intermediate result Lemma~\ref{lemma:one_input_coupling}, which shows that for almost every $\bar{u} \in U$, all eigenvalues of $H(\bar{u})$ are simple, and there is a non-zero coupling via $H_2$ between each pair of eigenvectors $(\phi_j(\bar{u}), \phi_{j+1}(\bar{u}))$ for $j \in \mathcal{I}^{-}$.
\item We then apply  Lemma~\ref{lemma:non_resonance_finite} from \cite{liang:hal-04174206} and deduce that, for almost every $\bar{u}\in U$, the eigenvalues of $H(\bar{u})$ are non-resonant (i.e. $\lambda_{j}(\bar{u})-\lambda_{k}(\bar{u})\neq\lambda_{p}(\bar{u})-\lambda_{q}(\bar{u})$ if $(j,k)\neq(r,s)$ and $j\neq k$, $r\neq s$).
 \item We conclude that for almost every $\bar{u}_1 \in I^1$, there exists some $\bar{u}_2 \in \{u_2 \mid (\bar{u}_1, u_2) \in U\}$ such that the eigenvalues of $H(\bar{u}_1, \bar{u}_2)$ are non-resonant, and there is a non-zero coupling via $H_2$ between each pair of eigenvectors $(\phi_j(\bar{u}_1, \bar{u}_2), \phi_{j+1}(\bar{u}_1, \bar{u}_2))$ for $j \in \mathcal{I}^{-}$. The connectedness of all eigenvectors via $H_2$ and the non-resonance of the spectrum allow us to apply the classical spectral condition for controllability (see \cite[Proposition 11]{boscain2015approximate} and \cite[Proposition 15]{boscain2015approximate}). We conclude that the system characterized by $H(\bar{u}_1, u_2(t))$ is controllable (respectively, approximately controllable) on the unitary group, completing the proof of Theorem~\ref{theorem:main}.
\end{itemize}}
\begin{lemma}\label{lemma:one_input_coupling}
     Let $m=2$, assume that  System~\eqref{system} satisfies $\textbf{(}\textbf{H}\textbf{)}$ or $\textbf{(H}^{\infty})$, and that the spectrum of $H(\cdot)$ is weakly conically connected. Then there exists 
    a subset { $\hat{U}$} of  $U$ with zero-measure complement 
    such that for each { $\bar{u}\in\hat{U}$} all eigenvalues of $H(\bar{u})$ are simple and 
    \begin{equation*}
        \langle\phi_{j}(\bar{u}),H_{2}\phi_{j+1}(\bar{u})\rangle\neq 0,\quad \forall j\in\mathcal{I}^-.
    \end{equation*}
\end{lemma}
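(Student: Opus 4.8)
The plan is to construct, for each $j\in\mathcal I^-$, a set $\hat U_j\subseteq U$ with Lebesgue-null complement on which $\lambda_j,\lambda_{j+1}$ are simple and $\langle\phi_j(\cdot),H_2\phi_{j+1}(\cdot)\rangle\neq0$, and then take $\hat U=\bigcap_{j\in\mathcal I^-}\hat U_j$: this is a countable intersection, so its complement is still null, and on $\hat U$ every $\lambda_k$ is simple since each index is of the form $j$ or $j+1$. Throughout I will use two standard facts. First, since $u\mapsto H(u)$ is affine it is real-analytic, so the ordered eigenvalues are continuous in $u$ and, near any point where an eigenvalue — or an isolated finite group of eigenvalues — has locally constant multiplicity, the associated Riesz spectral projections depend real-analytically on $u$ (Kato's analytic perturbation theory; in the $(\mathbf H^\infty)$ case this uses compact resolvent and Kato-smallness, so only finitely many eigenvalues lie in any bounded window). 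Second, the zero set of a real-analytic function that is not identically zero on a connected open subset of $\mathbb R^m$ is closed, nowhere dense, and Lebesgue-null.

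I would first dispose of simplicity. Put $\Sigma_i=\{u\in U:\lambda_i(u)=\lambda_{i+1}(u)\}$, so the set of $u$ at which $H(u)$ has a multiple eigenvalue is $\bigcup_{i\in\mathcal I^-}\Sigma_i$; it suffices to show each $\Sigma_i$ is null. Under $\textbf{(H)}$, $\Sigma_i$ is contained in the zero set of the discriminant of the characteristic polynomial of $H(u)$, a polynomial in $u$; this function is not identically zero because at a conical or weakly conical intersection $\bar u_i$ the multiplicity is exactly two and $\bar u_i$ is an isolated point of $\Sigma_i$ (Definitions~\ref{def:conical},~\ref{def:weakly_conical_intersection}), so by continuity of the ordered spectrum a punctured neighborhood of $\bar u_i$ consists of points with simple spectrum. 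Under $(\mathbf H^\infty)$ I would run the same argument locally, compressing $H(\cdot)$ near each point to the finite-dimensional, real-analytically varying spectral subspace of the eigenvalues in a fixed bounded interval, and then cover $U$ by countably many such neighborhoods. As a byproduct, for each $j$ the open set $V_j=\{u\in U:\lambda_j(u)\text{ and }\lambda_{j+1}(u)\text{ are simple}\}$ has null complement $U\setminus V_j\subseteq\Sigma_{j-1}\cup\Sigma_j\cup\Sigma_{j+1}$; using $m=2$ (a punctured disk in $\mathbb R^2$ is connected) together with the fact that, by the same isolation property, no $\Sigma_i$ contains an arc of multiplicity-two crossings, I would argue that $V_j$ is connected (or at least that each of its components contains a punctured neighborhood of an intersection point of $\lambda_j$ and $\lambda_{j+1}$).

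For the coupling, observe that on $V_j$ the function $f_j(u)=\operatorname{tr}\!\big(P_j(u)H_2P_{j+1}(u)H_2\big)=|\langle\phi_j(u),H_2\phi_{j+1}(u)\rangle|^2$ is real-analytic, so it suffices to exhibit one point of $V_j$ (in each component) at which $f_j\neq0$, and I would look near an intersection $\bar u_j$. Reduce to the two-dimensional eigenspace: for $u$ near $\bar u_j$ let $P(u)$ be the rank-two Riesz projection onto the $\{\lambda_j(u),\lambda_{j+1}(u)\}$-eigenspaces; in a real-analytic orthonormal frame of its range write the compressed Hamiltonian as $\tfrac12(\lambda_j+\lambda_{j+1})I+\vec\rho(u)\!\cdot\!\vec\sigma$ and $P(u)H_2P(u)$ as $c(u)I+\vec a(u)\!\cdot\!\vec\sigma$, with $\vec\rho,\vec a$ real-analytic, $|\vec\rho(u)|=\tfrac12(\lambda_{j+1}(u)-\lambda_j(u))$ and $\vec\rho(\bar u_j)=0$. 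Since $\phi_j(u),\phi_{j+1}(u)$ are the eigenvectors of $\vec\rho(u)\!\cdot\!\vec\sigma$, a short $2\times2$ computation gives, for $u\neq\bar u_j$,
\[
\big|\langle\phi_j(u),H_2\phi_{j+1}(u)\rangle\big|=\big|\vec a(u)\times\hat\rho(u)\big|=\frac{\big|\vec a(u)\times\vec\rho(u)\big|}{|\vec\rho(u)|},
\]
so $f_j(u)=0$ exactly when $\vec a(u)\parallel\vec\rho(u)$. The columns of $D\vec\rho(\bar u_j)$ are the Bloch vectors of the traceless parts of $P(\bar u_j)H_1P(\bar u_j)$ and $P(\bar u_j)H_2P(\bar u_j)$, and a unit direction $\eta$ is conical at $\bar u_j$ precisely when $D\vec\rho(\bar u_j)\eta\neq0$. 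If $\bar u_j$ is conical, $D\vec\rho(\bar u_j)$ has rank two, hence $u\mapsto\vec\rho(u)$ sweeps (to first order) a $2$-plane through the origin and cannot stay parallel to $\vec a(u)\to\vec a(\bar u_j)$; therefore $f_j\not\equiv0$ near $\bar u_j$.

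The hard part will be the weakly conical case, where $D\vec\rho(\bar u_j)$ has rank one and the first-order comparison fails to obstruct $\vec a\parallel\vec\rho$. Here I would exploit that $\bar u_j$ is isolated in $\Sigma_j=\{\vec\rho=0\}$: for instance $P(u)H_2P(u)$ cannot be a scalar matrix for all $u$ near $\bar u_j$, since by the Hellmann–Feynman formula that would force $\lambda_{j+1}-\lambda_j$ to be independent of $u_2$ on the punctured neighborhood and hence identically zero along the $u_2$-segment through $\bar u_j$, contradicting isolation; and more generally one expands $\vec\rho$ to the (finite, by isolation) order at which it first leaves the line spanned by $\vec a(\bar u_j)$ and checks $\vec a\times\vec\rho\not\equiv0$. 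Granting this, $\hat U_j:=V_j\setminus\{f_j=0\}$ has null complement and $\hat U:=\bigcap_{j\in\mathcal I^-}\hat U_j$ proves the lemma. To summarize, the two delicate points are exactly this weakly conical local non-degeneracy estimate, and the bookkeeping in Step~1 guaranteeing that every connected component of $V_j$ actually reaches a $(\lambda_j,\lambda_{j+1})$-intersection (equivalently, that $V_j$ is connected), so that the single local computation above controls $f_j$ on all of $V_j$.
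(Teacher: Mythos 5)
Your overall reduction coincides with the paper's: both proofs note that $u\mapsto P_j(u)H_2P_{j+1}(u)$ (equivalently your $f_j$) is real-analytic on the open connected set where $\lambda_j$ and $\lambda_{j+1}$ are simple, so its zero set is Lebesgue-null unless the function vanishes identically, and both then rule out identical vanishing by working near a $(\lambda_j,\lambda_{j+1})$-intersection, which exists by weak conical connectedness. The connectivity worry you raise is not a real obstacle (each intersection set is closed and locally finite by Definitions~\ref{def:conical} and~\ref{def:weakly_conical_intersection}, and an open connected subset of $\mathbb{R}^2$ minus such a set remains connected, which is exactly how the paper treats it), your identification of the vanishing of the coupling with $\vec{a}(u)\parallel\vec{\rho}(u)$ is correct, and your rank-two argument does settle the conical case.

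The genuine gap is where you flag it: the weakly conical case. Your proposed fix --- expanding $\vec{\rho}$ to the order at which it ``first leaves the line spanned by $\vec{a}(\bar u_j)$'' --- does not work as stated, because under the contradiction hypothesis $\vec{\rho}(u)$ is parallel to the \emph{varying} vector $\vec{a}(u)$, so it can leave the fixed line $\mathbb{R}\vec{a}(\bar u_j)$ without the cross product ever becoming nonzero. One can complete your route when $\vec{a}(\bar u_j)\neq 0$: then $\vec{\rho}=\beta\vec{a}$ for an analytic scalar $\beta$, and isolation of $\bar u_j$ in $\{\beta=0\}$ forces $\beta$ to vanish to order at least two (a first-order zero would give a curve of zeros by the implicit function theorem), so the gap $2|\beta|\,|\vec{a}|$ is $O(|u-\bar u_j|^2)$ in every direction and no conical direction can exist --- contradiction. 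But when $\vec{a}(\bar u_j)=\partial_{u_2}\vec{\rho}(\bar u_j)=0$, i.e.\ when the gap opens only super-linearly in the $u_2$-direction (the typical weakly conical picture, and the relevant one for the paper's Jaynes--Cummings application), no such factorization is available and your Hellmann--Feynman remark only disposes of the sub-case $\vec{a}\equiv 0$. The paper closes this case by an entirely different mechanism: assuming $P_jH_2P_{j+1}\equiv 0$, it shows that the individual projections $P_j,P_{j+1}$ are transported along vertical segments by the same continuous parallel-transport operator $V$ that transports the rank-two projection $P$, hence extend continuously through $\bar u_j$; on the other hand, along the one conical direction the analytic eigenvalue branches cross transversally, so the ordered projections must swap, giving $P_s(\Gamma(0^-))\neq P_s(\Gamma(0^+))$. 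That argument uses only the existence of a single conical direction and covers conical and weakly conical intersections uniformly; you would need either to adopt it or to supply the missing estimate in the degenerate case above.
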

\begin{proof}
    For each $j\in\mathcal{I}$, let us define 
    \begin{equation*}
        U_{j}=\left\{u\in U\mid \lambda_{j}(u)\text{ is simple}\right\}.
    \end{equation*}
    Since the spectrum is weakly conically connected, $U_{j}$ is an open and dense subset of $U$ with discrete complement. 
    
    For every $j\in \mathcal{I}^-$,  let us define 
    \begin{equation*}
        Z_{j}=\left\{u\in U_{j}\cap U_{j+1}\mid P_{j}(u)H_{2}P_{j+1}(u)=0 \right\}.
    \end{equation*}
    Because of the analyticity of {$u\mapsto P_{j}(u)H_{2}P_{j+1}(u)$} (cf.~\cite{kato1966perturbation}) on the open and connected set $U_{j}\cap U_{j+1}$, we have either that $Z_{j}=U_{j}\cap U_{j+1}$ or that $Z_{j}$ has empty interior and  zero measure.
    Let us define
    \begin{equation*}
{ \hat{U}}=\left(\cap_{j\in \mathcal{I}}U_{j}\right)\setminus\left(\cup_{j\in \mathcal{I}^{-}}Z_{j}\right).
    \end{equation*}
Notice that for all $u\in{ \hat{U}}$, the spectrum of $H(\cdot)$ is simple and 
\begin{equation*}
    |\langle\phi_{j}(u),H_{2}\phi_{j+1}(u)\rangle|=\|P_{j}(u)H_{2}P_{j+1}(u)\| \neq0,\quad \forall j\in\mathcal{I}^-.
\end{equation*}
Using the superscript $c$ to denote the complement of a set in $U$, we have\begin{equation*}
{ \hat{U}}^{c}=(\cup_{j\in\mathcal{I}}U_{j}^{c})\cup(\cup_{j\in\mathcal{I}^{-}}Z_{j}).\end{equation*} We need to prove that ${ \hat{U}}^{c}$ has empty interior and zero measure.
Since a countable union of subsets of $\mathbb{R}^{m}$ with 
zero measure has 
zero measure and 
empty interior, 
we are left
to prove that for each $j\in\mathcal{I}^-$, the measure of $Z_{j}$ is zero. This can be done by proving that {$u\mapsto P_{j}(u)H_{2}P_{j+1}(u)$} is not identically zero on $U_{j}\cap U_{j+1}$. Let us assume by contradiction that 
\begin{equation}\label{eq:assumption}
    P_{j}(u)H_{2}P_{j+1}(u)=0,\quad\forall u\in U_{j}\cap U_{j+1}.
\end{equation}
Let us take an intersection $\bar{u}=(\bar{u}_{1},\bar{u}_{2})\in U$ between $\lambda_{j}(\cdot)$ and $\lambda_{j+1}(\cdot)$. Since $\bar{u}$ is an isolated { intersection} point in $U$, there exists $\delta>0$ such that 
\begin{equation*}
    [\bar{u}_{1}-\delta,\bar{u}_{1}+\delta]\times[\bar{u}_{2}-\delta,\bar{u}_{2}+\delta]\setminus\{\bar{u}\}\subset U_{j}\cap U_{j+1}.
\end{equation*}
 Define $W=[\bar{u}_{1}-\delta,\bar{u}_{1}+\delta]\times[\bar{u}_{2}-\delta,\bar{u}_{2}+\delta]$.
For $u\in W$, let us denote by $P(u)$ the spectral projection associated with the pair of eigenvalues $\{\lambda_{j}(\cdot),\lambda_{j+1}(\cdot)\}$. 
For $u_{1}\in [\bar{u}_{1}-\delta,\bar{u}_{1}+\delta]$, let us define $\gamma(u_{1},\cdot):[-\delta,\delta]\to W$ by $\gamma(u_{1},t)=(u_{1},\bar{u}_{2}+t)$.
    Denote by $\mathcal{U}(\mathcal{H})$ the group of unitary transformations of $\mathcal{H}$ and let us define the transformation function 
 $V:W\to \mathcal{U}(\mathcal{H})$ such that for all $u_{1}\in [\bar{u}_{1}-\delta,\bar{u}_{1}+\delta]$, $V(u_{1},\cdot)$ satisfies
\begin{multline}\label{eq:ODE}
    \frac{\text{d}}{\text{d}t}V(\gamma(u_{1},t))=\left[\frac{\text{d}}{\text{d}t}P(\gamma(u_{1},t)),P(\gamma(u_{1},t))\right]V(\gamma(u_{1},t)),\quad V(\gamma(u_{1},-\delta))=\text{Id}.
\end{multline}
By classical results on solutions of ODEs,  $V(\cdot,\cdot)$ is well defined and
continuous on $W$. 

We prove next that for each vertical segment $\gamma(u_{1},\cdot)$ that does not intersect $\bar{u}$, and for each $s\in\{j,j+1\}$, the transformed projector $V(\cdot)^{\dag}P_{s}(\cdot)V(\cdot)$ is conserved along the segment.

Let us fix for now $u_{1}\in[\bar{u}_{1}-\delta,\bar{u}_{1})\cup(\bar{u}_{1},\bar{u}_{1}+\delta]$, and set $P(t)=P(\gamma(u_{1},t))$, $V(t)=V(\gamma(u_{1},t))$, {$P_{j}(t)=P_{j}(\gamma(u_{1},t))$, $P_{j+1}(t)=P_{j+1}(\gamma(u_{1},t))$}, $\lambda_{j}(t)=\lambda_{j}(\gamma(u_{1},t))$, $\lambda_{j+1}(t)=\lambda_{j+1}(\gamma(u_{1},t))$, and $H(t)=H(\gamma(u_{1},t))$. 
{
By differentiating with respect to $t$ the equation
\begin{equation*}
    P_{j}(t)P_{j+1}(t)=0,\quad t\in[-\delta,\delta],
\end{equation*}
we obtain that
\begin{equation}
    \dot{P}_{j}(t)P_{j+1}(t)=-P_{j}(t)\dot{P}_{j+1}(t),\qquad t\in[-\delta,\delta].
    \label{eq:projector_1}
\end{equation}
Using the equation
\begin{equation*}
    P_{s}(t)H(t)=H(t)P_{s}(t)=\lambda_{s}(t)P_{s}(t),\qquad s\in\{j,j+1\},
\end{equation*}
and differentiating the equation
\begin{equation*}
    P_{j}(t)H(t)P_{j+1}(t)=0,\qquad t\in[-\delta,\delta],
\end{equation*}
we deduce that
\begin{equation*}
\begin{aligned}
    0={}& 
    \dot{P}_{j}(t)H(t)P_{j+1}(t)+P_{j}(t)\dot{H}(t)P_{j+1}(t)
    +P_{j}(t)H(t)\dot{P}_{j+1}(t)\\
    ={}&\lambda_{j+1}(t)\dot{P}_{j}(t)P_{j+1}(t)
    +P_{j}(t)H_{2}P_{j+1}(t)
    +\lambda_{j}(t)P_{j}(t)\dot{P}_{j+1}(t).
\end{aligned}
\end{equation*}
By equations~\eqref{eq:assumption} and \eqref{eq:projector_1}, and given that $\lambda_{j}(t)\neq\lambda_{j+1}(t)$, we get, for all $t\in[-\delta,\delta]$,
    \begin{equation}\label{eq:dotPjPj+1}
        \dot{P}_{j}(t)P_{j+1}(t)=\frac{1}{\lambda_{j}(t)-\lambda_{j+1}(t)}P_{j}(t)H_{2}P_{j+1}(t)=0=\dot{P}_{j+1}(t)P_{j}(t).
    \end{equation}
    Hence,
\begin{equation}
\begin{aligned}
    \left[\dot{P}(t),P(t)\right]&= \left[\dot{P}_{j}(t)+\dot{P}_{j+1}(t),P_{j}(t)+P_{j+1}(t)\right]\\
    &=\left[\dot{P}_{j}(t),P_{j}(t)\right]+\left[\dot{P}_{j+1}(t),P_{j+1}(t)\right].
    \label{eq:bracket}
\end{aligned}
\end{equation}
Moreover, by differentiating 
\begin{equation*}
    P_{s}(t)^{2}=P_{s}(t),\qquad s\in\{j,j+1\},
\end{equation*}
with respect to $t$, it follows that, for all $t\in[-\delta,\delta]$,
\begin{equation}
\label{eq:projectors}
\dot{P}_{s}(t)P_{s}(t)+P_{s}(t)\dot{P}_{s}(t)=\dot{P}_{s}(t),\qquad s\in\{j,j+1\}.   
\end{equation}
For $s\in\{j,j+1\}$, by left-multiplying 
equation~\eqref{eq:projectors} by $P_{s}(t)$, we can deduce that
\begin{equation*}
    P_{s}(t)\dot{P}_{s}(t)P_{s}(t)+P_{s}(t)\dot{P}_{s}(t)=P_{s}(t)\dot{P}_{s}(t),\qquad t\in[-\delta,\delta],
\end{equation*}
and thus
\begin{equation}
\label{eq:projectors_2}
    P_{s}(t)\dot{P}_{s}(t)P_{s}(t)=0,\qquad t\in[-\delta,\delta].
\end{equation}
By equations~\eqref{eq:ODE}, 
\eqref{eq:dotPjPj+1},
\eqref{eq:bracket}, \eqref{eq:projectors}, 
and~\eqref{eq:projectors_2}, we can then obtain that, for $s\in\{j,j+1\}$ and along each $\gamma(u_{1},\cdot)$, 
\begin{multline*}
    \frac{\text{d}}{\text{d}t}\left(V(t)^{\dag}P_{s}(t)V(t)\right)=-V(t)^{\dag}[\dot{P}(t),P(t)]P_{s}(t)V(t)\\+V(t)^{\dag}\dot{P}_{s}(t)V(t)
    \phantom{=}+V(t)^{\dag}P_s(t)[\dot{P}(t),P(t)]V(t)\\
    =V(t)^{\dag}\Big(-\dot{P}_{s}(t)P_{s}(t)+\dot{P}_{s}(t)-P_{s}(t)\dot{P}_{s}(t)\Big)V(t)=0.
\end{multline*}
Therefore, for $s\in\{j,j+1\}$, $V(\cdot)^{\dag}P_{s}(\cdot)V(\cdot)$ is conserved along each vertical segment $\gamma(u_{1},\cdot)$ with $u_{1}\in[\bar{u}_{1}-\delta,\bar{u}_{1})\cup(\bar{u}_{1},\bar{u}_{1}+\delta]$. For all $u_{1}\in [\bar{u}_{1}-\delta,\bar{u}_{1})\cup(\bar{u}_{1},\bar{u}_{1}+\delta]$ and $u_{2}\in[\bar{u}_{2}-\delta,\bar{u}_{2}+\delta]$, we have that 
\begin{equation*}
    P_{s}(u_{1},u_{2})=V(u_{1},u_{2})P_{s}(u_{1},\bar{u}_{2}-\delta)V(u_{1},u_{2})^{\dagger}.
\end{equation*}
Since $P_{s}(\cdot)$ is analytic on $W\setminus\{\bar{u}\}$ and $V(\cdot)$ is continuous on $W$, by continuous extension, we can obtain that
\begin{equation*}
    P_{s}(u_{1},u_{2})=V(u_{1},u_{2})P_{s}(u_{1},\bar{u}_{2}-\delta)V(u_{1},u_{2})^{\dagger}
\end{equation*}
for every $u\in W\setminus\{\bar{u}\}$.

Consider an analytic path ${ \Gamma}:(-1,1)\to W$ such that ${ \Gamma}(0)=\bar{u}$ and ${ \dot{\Gamma}}(0)$ is a non-zero conical direction at $\bar{u}$ for the intersection between $\lambda_{j}(\cdot)$ and $\lambda_{j+1}(\cdot)$. Then, for $s\in\{j,j+1\}$, we have that
\begin{equation*}
\begin{aligned}
    \lim_{t\rightarrow0^{+}}P_{s}(\Gamma(t))=\lim_{t\rightarrow0^{-}}P_{s}(\Gamma(t))=V(\bar{u}_{1},\bar{u}_{2})P_{s}(\bar{u}_{1},\bar{u}_{2}-\delta)V(\bar{u}_{1},\bar{u}_{2})^{\dagger}.
\end{aligned}
\end{equation*}
However, { by the analytic dependence of the spectrum along $\Gamma$ in a neighborhood of $\Gamma(0)$ (see \cite{rellich1969perturbation}), when passing through the eigenvalue intersection via a conical direction,} we should have that 
\begin{equation*}
    P_{s}(\Gamma(0^{-}))\neq P_{s}(\Gamma(0^{+})).
\end{equation*}
The contradiction is reached, concluding the proof. 
}
\end{proof}

We also recall the following result.

\begin{lemma}[Lemma 3.7 in \cite{liang:hal-04174206}]\label{lemma:non_resonance_finite}\label{lemma:non_resonance_infinite}
Let System~\eqref{system} satisfy \textbf{(H)} or $\textbf{(}\textbf{H}^{\infty}\textbf{)}$. 
Assume that the spectrum of $H(\cdot)$ is weakly conically connected and that the eigenvalue intersections have rationally unrelated germs at every intersection point $\bar{u}\in U$.
 Then there exists a subset  { $\hat{U}$} of  $U$ with zero-measure complement 
 such that 
 if $\sum_{j=1}^{N}\alpha_{j}\lambda_{j}(\bar{u})=0$ with $\bar{u}\in{ \hat{U}}$, $N\in\mathbb{N}^{*}$, $N\le \dim\mathcal{H}$, and $\{\alpha_{1},\dotsc,\alpha_{N}\}\in\mathbb{Q}^{N}$ then $\alpha_{1}=\dots=\alpha_{N}$. 
\end{lemma}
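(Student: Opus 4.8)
The plan is to reduce the statement to a fact about countably many real-analytic functions. For a finite rational tuple $\alpha=(\alpha_1,\dots,\alpha_N)$ with $N\le\dim\mathcal H$, consider the continuous function $f_\alpha\colon U\to\mathbb R$, $f_\alpha(u)=\sum_{j=1}^N\alpha_j\lambda_j(u)$, and call $\alpha$ \emph{admissible} if $\alpha_1=\dots=\alpha_N$. Since the spectrum is weakly conically connected, the degeneracy locus $D:=\bigcup_{j\in\mathcal I^-}\{u\in U\mid\lambda_j(u)=\lambda_{j+1}(u)\}$ is a countable union of sets of isolated points (every conical or weakly conical intersection is isolated in its own level set), hence $D$ has zero measure, and since $m\ge2$ and $D$ is countable, $U\setminus D$ is connected. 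On $U\setminus D$ every eigenvalue is simple, so each $\lambda_j$, and therefore each $f_\alpha$, is real-analytic there; being real-analytic on a connected open set, $f_\alpha$ either vanishes identically on $U$ or has a zero set of measure zero. I would then take
\[
\hat U:=U\setminus\Big(D\cup\bigcup_{\alpha\ \text{rational, not admissible}}\{u\in U\mid f_\alpha(u)=0\}\Big),
\]
which is the complement of a countable union of null sets, hence has zero-measure complement, and on which any rational relation $\sum_{j=1}^N\alpha_j\lambda_j(\bar u)=0$ forces $\alpha$ to be admissible, i.e.\ $\alpha_1=\dots=\alpha_N$. Thus everything reduces to showing $f_\alpha\not\equiv0$ for every non-admissible rational $\alpha$.

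To prove this I would argue by contradiction. Assume $f_\alpha\equiv0$ with $\alpha$ not admissible; then (extending $\alpha$ by $\alpha_j=0$ for $j>N$) there is $k\le N-1$ with $\alpha_k\ne\alpha_{k+1}$, and by weak conical connectedness an intersection $\bar u\in U$ between $\lambda_k(\cdot)$ and $\lambda_{k+1}(\cdot)$ exists. Since all intersections have multiplicity two and are isolated, I can shrink a neighbourhood $V$ of $\bar u$ so that $D\cap V=\{\bar u\}$ and there the colliding consecutive indices form the set $\mathcal I(\bar u)$. Grouping the eigenvalues on $V$ into spectral clusters and using that the associated Riesz projections depend analytically on $u$, one gets a decomposition
\[
f_\alpha\big|_V=A+\sum_{l\in\mathcal I(\bar u)}\frac{\alpha_{l+1}-\alpha_l}{2}\,g_l,\qquad g_l:=\lambda_{l+1}-\lambda_l=\sqrt{\rho_l},
\]
where $A$ and the $\rho_l\ge0$ are real-analytic on $V$, each $\rho_l$ vanishes at $\bar u$, and the coefficient of $g_k$ is nonzero. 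Along a conical direction $\eta$ at the $l$-th intersection the conical lower bound forces $\rho_l(\bar u+t\eta)$ to vanish to order exactly two, so $g_l(\bar u+t\eta)=|t|\sqrt{r_l(t)}$ with $r_l(0)>0$; in particular $g_l$ is not real-analytic at $\bar u$.

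Now $f_\alpha\equiv0$ says that the rational combination $\sum_{l\in\mathcal I(\bar u)}\frac{\alpha_{l+1}-\alpha_l}{2}g_l$, whose $k$-th coefficient is nonzero, equals the real-analytic function $-A$ on $V$. The decisive step is to rule this out: no nontrivial rational combination of the germs $(g_l)_{l\in\mathcal I(\bar u)}$ at $\bar u$ can be equal to a real-analytic germ. When $\mathcal I(\bar u)=\{k\}$ this is immediate, since $g_k$ is not real-analytic. In general I would use the hypothesis that the intersections have rationally unrelated germs at $\bar u$ together with the conical-direction description of the $g_l$: restricting to a conical direction and isolating the leading $|t|$-type singular part turns the would-be cancellation into a rational linear relation among the $g_l$, which the hypothesis forbids, with the Puiseux-type expansions coming from the real-analyticity of the $\rho_l$ used to make the bookkeeping precise (this is the argument carried out in \cite{liang:hal-04174206}). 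It yields $\alpha_k=\alpha_{k+1}$, contradicting the choice of $k$, so $f_\alpha\not\equiv0$.

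The main obstacle I anticipate is exactly this last step, namely upgrading ``no nontrivial rational relation among the gap functions $g_l$'' to ``no nontrivial rational combination of the $g_l$ is real-analytic''. It requires exploiting the specific $\sqrt{\text{real-analytic}}$ shape of eigenvalue gaps and a careful directional analysis (Rellich analyticity along analytic curves, Puiseux expansions) at the common intersection point; by contrast, the identically-zero/measure-zero dichotomy for real-analytic functions, the countable union, the connectedness of $U\setminus D$, and the cluster-wise local decomposition near $\bar u$ are routine.
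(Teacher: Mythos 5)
This lemma is not proved in the paper at all: it is imported verbatim as ``Lemma 3.7 in \cite{liang:hal-04174206}'', so the only ``proof'' the paper offers is the citation. Your measure-theoretic scaffolding is sound and almost certainly matches the reference's architecture: the degeneracy locus $D$ is a countable union of discrete sets (isolation is built into both Definitions~\ref{def:conical} and \ref{def:weakly_conical_intersection}), $U\setminus D$ is open, connected and of full measure, each $f_\alpha$ is real-analytic there, and the identically-zero/null-zero-set dichotomy over the countably many non-admissible rational tuples reduces everything to showing $f_\alpha\not\equiv 0$. The local cluster decomposition $f_\alpha|_V=A+\sum_l\frac{\alpha_{l+1}-\alpha_l}{2}g_l$ with $A$ analytic and $g_l=\sqrt{\rho_l}$ is also correct, since sums and products of eigenvalues within a rank-two spectral cluster are analytic.

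The genuine gap is exactly where you place it, and it is not a routine bookkeeping issue: the hypothesis of Definition~\ref{def:unrelated_germs} forbids a nontrivial rational combination of the gap germs from \emph{vanishing identically} on a neighborhood of $\bar u$, whereas your argument needs that no such combination can be \emph{real-analytic} at $\bar u$. The one-dimensional restriction you sketch does not bridge this: along a fixed conical direction $\eta$ the singular part of $\sum_l c_l g_l$ is $|t|\,B(t)$ with $B(t)=\sum_{l\,:\,d_l\ \mathrm{odd}}c_l\,t^{d_l-1}\sqrt{s_l(t)}$, and if several intersections admit $\eta$ as a conical direction ($d_l=1$) their leading coefficients $c_l\sqrt{s_l(0)}$ can cancel in $B(0)$ without any of the $c_l$ vanishing and without producing an identically vanishing combination on a full neighborhood $V$ --- so no contradiction with rational unrelatedness is reached on that line alone. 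Closing this requires the genuinely multi-directional analysis (Rellich analyticity along families of analytic curves and Puiseux-type expansions of the $\rho_l$) that you attribute to \cite{liang:hal-04174206} but do not carry out; since that step is the mathematical core of the lemma rather than a corollary of its hypotheses as literally stated, the proposal as written does not constitute a proof. The only case you actually complete is $|\mathcal{I}(\bar u)|=1$, where non-analyticity of a single $g_k$ suffices (and, notably, does not even use the rationally-unrelated-germs assumption).
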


We can now prove Theorem \ref{theorem:main}.

\begin{proof}[Proof of Theorem \ref{theorem:main}] By applying Lemma~\ref{lemma:non_resonance_infinite} we deduce that there exists a subset { $\hat{U}_{1}$} of $U$ which has zero-measure complement such that for all $\bar{u}\in{ \hat{U}_{1}}$ the spectrum is simple and  the eigenvalues $\{\lambda_{j}(\bar{u})\}_{j\in\mathcal{I}}$ are non-resonant (i.e. two spectral gaps $\lambda_{j}(\bar{u})-\lambda_{k}(\bar{u})$ and $\lambda_{r}(\bar{u})-\lambda_{s}(\bar{u})$ are different if $(j,k)\neq(r,s)$ and $j\neq k$, $r\neq s$). By applying Lemma~\ref{lemma:one_input_coupling}, there exists another subset of { $\hat{U}_{2}$} of $U$ which has zero-measure complement such that for all $\bar{u}\in{ \hat{U}_{2}}$ the spectrum is simple and 
     \begin{equation*}
        \langle\phi_{j}(\bar{u}),H_{2}\phi_{j+1}(\bar{u})\rangle\neq 0,\quad \forall j\in\mathcal{I}^-.
    \end{equation*}
Let us define
\begin{equation*}
    { \hat{U}=\hat{U}_{1}\cap\hat{U}_{2}},
\end{equation*}
which has zero-measure complement in $U$. Note that { 
\begin{equation*}
    { \hat{I}^{1}=\pi_1(\hat{U})},
\end{equation*}
where $\pi_{1}$ is the standard projection on the first component}, has zero-measure complement in $I^{1}$ (as defined at the beginning of this section). 
Take { $\bar{u}_{1}\in \hat{I}^{1}$} and consider the single-input system \eqref{one_input_system} for this static value. By definition of { $\hat{I}^{1}$}, there exists $\bar{u}_{2}\in\mathbb{R}$ such that $(\bar{u}_{1},\bar{u}_2)\in U$, the eigenvalues are non-resonant at $(\bar{u}_{1},\bar{u}_2)$ and, for all $j\in\mathcal{I}^{-}$, $\langle\phi_{j}(\bar{u}_1,\bar{u}_2),H_{2}\phi_{j+1}(\bar{u}_1,\bar{u}_2)\rangle\neq 0$. We can deduce from  \cite[Proposition 11]{boscain2015approximate} and \cite[Proposition 15]{boscain2015approximate} that, at every ${ \bar{u}_1\in\hat{I}^{1}}$, the single-input system \eqref{one_input_system} 
is controllable { on} the unitary group when $\text{dim}(\mathcal{H})<\infty$ and is approximately controllable { on} the unitary group when $\text{dim}(\mathcal{H})=\infty.$ 
\end{proof}
{ 
\begin{remark}
Let us define
\[
I^{1}_{\mathrm{NC}} = \left\{ \bar{u}_1 \in I^1 \ \middle| \ 
\begin{matrix}
\text{ the system characterized by }\\H(\bar{u}_1, u_2)\text{ is not controllable }\\ \text{ on the unitary group with } \\
\text{ the control } u_2(\cdot)
\end{matrix}
\right\}.
\]
{ Notice that in the case $\dim \mathcal{H} < \infty$, for $\bar{u}_1\in I^{1}$, the system characterized by $H(\bar{u}_1,u_2)$ is controllable on the unitary group if and only if the Lie algebra generated by $iH_{0}+i\bar{u}_1H_1$ and $iH_2$ is $su(n)$ or $u(n)$.
As a result, if for some $\bar{u}_1$ the system characterized by $H(\bar{u}_1, u_2)$ is controllable on the unitary group with $u_2(\cdot)$, then there exists $\delta > 0$ such that for all $\tilde{u}_1 \in (\bar{u}_1 - \delta, \bar{u}_1 + \delta)$, the system characterized by $H(\tilde{u}_1, u_2)$ remains controllable on the unitary group with $u_2(\cdot)$. Therefore, the set $I^{1}_{\mathrm{NC}}$ is closed, has measure zero, and cannot be dense in $I^{1}$.}

The generalization of this observation to the infinite-dimensional case is not straightforward and could be an interesting topic for further investigation.
\end{remark}}

\section{Application: Enantio-selective excitation in a three-level model for a chiral molecule}\label{s-enantio}
{ Chiral molecules occur as pairs of non-superimposable mirror-image
isomers, known as enantiomers. Enantio-selective excitation of rotational
or ro-vibrational states of chiral molecules as a precursor for chiral
discrimination with electric fields only is presently attracting
significant interest in molecular physics and physical chemistry
\cite{EibenbergerPRL17,PerezAngewandte17,Lee2022,Singh2023}. It is based
on quantum interference in cyclic excitation between three quantum states,
where constructive interference for one enantiomer and destructive
interference for the other leads to selective population of one of the
quantum states \cite{KralPRL01,Leibscher19}.
By analyzing the enantio-selective controllability of asymmetric top
molecules subject to several time-dependent controls, it can be shown that
complete enantio-selective separation can be obtained by a combination of
time-dependent controls with at least five different combinations and
frequencies \cite{Leibscher2022,Pozzoli2022}.

\begin{figure}[!t]
\begin{center}
\includegraphics[width=0.4\columnwidth]{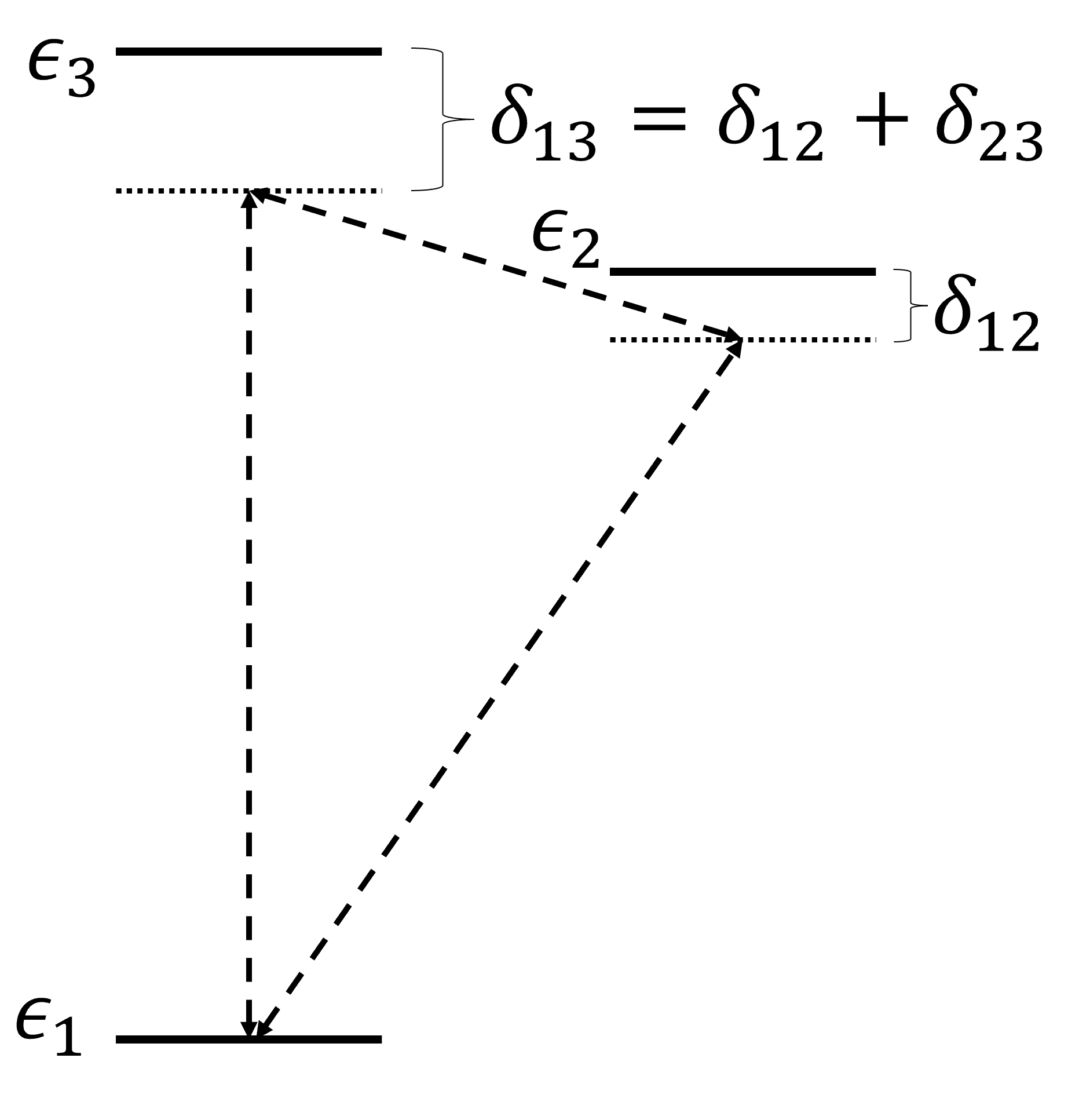}
	\end{center}	
    \caption{
    { Three-level cyclic excitation scheme for enantiomer selective excitation
of ro-vibrational states. The three states $|1\rangle$, $|2\rangle$,
$|3\rangle$ with energies $\epsilon_1$, $\epsilon_2$, and $\epsilon_3$,
respectively represent three ro-vibrational states of a chiral
molecule. The central frequency of the 
}
pulses driving the transition $i \leftrightarrow j$, $i,j=1,2,3$, is detuned from the energy difference $\epsilon_j - \epsilon_i$ by $\delta_{ij}$. Here we choose $\delta_{12}=\delta_{23}$.}  
	\label{fig:scheme_esst}
\end{figure}
In certain cases, i.e. if excited vibrational states are involved in the
process of chiral discrimination, continuous wave IR radiation can be
applied to excited ro-vibrational molecular states. In this case, the
three-level model for enantio-selective excitation \cite{Leibscher19} is
reduced to two (quasi) static inputs and a single time-dependent control,
a scenario for which enantio-selective controllability has not yet been
studied. In this case, the Hamiltonian can be written as
 \begin{eqnarray}\label{eq:H_I}
 	H^{\pm} = \left( \begin{array}{ccc}
 		-\delta_{12} & \pm H_{12} & H_{13} \\
 		\pm H_{12}    & 0 & H_{23}(t) \\
 		H_{13}   & H_{23}(t)  & \delta_{23}
 	\end{array} \right ),
 \end{eqnarray}
 Here, $\delta_{12}$, $\delta_{13}$ and $\delta_{23}$ denote the detunings of the electromagnetic fields from resonance, as depicted in Figure~\ref{fig:scheme_esst}. The detunings are chosen such that $\delta_{13}=\delta_{12}+\delta_{23}$.} The Hamiltonians for the two enantiomers differ in
the sign of one of the three off-diagonal elements, reflecting
the sign difference of one of the dipole moment projections. The off-diagonal matrix elements read $H_{12}=-\mu_{12} {\cal E}_{12}$,  $H_{13}=-\mu_{13} {\cal E}_{13}$, and $H_{23}(t)=-\mu_{23} {\cal E}_{23}(t)$, with $\mu_{ij}$ being the (transition) dipole moments and ${\cal E}_{12}$ and ${\cal E}_{13}$ the constant field amplitudes, while the field amplitude ${\cal E}_{12}(t)$
 is time-dependent.
 If the corresponding Schr\"odinger equation is simultaneously controllable for $H^+$ and $H^-$, complete enantio-selective state transfer is possible in the setup described above.
{\begin{remark}
    Notice that in equation~\eqref{eq:H_I}, the trace of $H^{\pm}$ is not necessarily zero. However, we can always choose $\delta_{12}=\delta_{23}$ in order for the trace to be zero. In the following, we will only study the case where $H^{\pm}$ is traceless. 
 \end{remark}}
 We investigate the controllability of the system consisting of the three-level drift Hamiltonian 
\begin{equation*}
    H_{0}=\begin{pmatrix}
        E_1 & 0 & 0\\
        0 & E_2 & 0\\
        0 & 0 & E_3
    \end{pmatrix},
\end{equation*}
where $E_1< E_2 < E_3$ and $E_1+E_2+E_3=0$, and set
\begin{equation*}
      H_{u}=\begin{pmatrix}
        0 & 1 & 0\\
        1 & 0 & 0\\
        0 & 0 & 0
    \end{pmatrix},\  H_{v}=\begin{pmatrix}
        0 & 0 & 1\\
        0 & 0 & 0\\
        1 & 0 & 0
    \end{pmatrix}\  H_{w}=\begin{pmatrix}
        0 & 0 & 0\\
        0 & 0 & 1\\
        0 & 1 & 0
    \end{pmatrix}.
\end{equation*}
Notice that $iH_0,iH_{u},iH_{v},iH_{w}\in {su}(3)$. For $\uu=(u,v,w)\in\mathbb{R}^{3}$, define 
  \begin{equation*}
  \begin{aligned}
      H^{+}(\uu)&=H_0+uH_u+vH_v+w H_{w},\\ H^{-}(\uu)&=H_0-uH_u+vH_v+w H_{w}.
    \end{aligned}
  \end{equation*}
For $(\bar{u},\bar{v})\in\mathbb{R}^2$, let us consider the 
two systems 
\begin{equation}
    \begin{aligned}
    \label{eq:simultaneous}
        i\frac{\text{d}}{\text{d}t}\psi^{+}(t)=H^{+}(\bar{u},\bar{v},w(t))\psi^{+}(t),\\
        i\frac{\text{d}}{\text{d}t}\psi^{-}(t)=H^{-}(\bar{u},\bar{v},w(t))\psi^{-}(t).
    \end{aligned}
\end{equation}
\begin{prop}
\label{prop:individual_controllability}
    Assume that the control $w(\cdot)$ takes values in $\mathbb{R}$. Then for almost every $(\bar{u},\bar{v})\in\mathbb{R}^{2}$, each system in equation \eqref{eq:simultaneous} is controllable. 
\end{prop}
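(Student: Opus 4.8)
The plan is to deduce Proposition~\ref{prop:individual_controllability} from Theorem~\ref{theorem:main}, applied — for one frozen control — to a two-input subfamily. Since $H^{-}(u,v,w)=H^{+}(-u,v,w)$, the spectrum of $H^{-}(\cdot)$ is obtained from that of $H^{+}(\cdot)$ by the measure-preserving change of variable $u\mapsto-u$, so it suffices to treat $H^{+}$; the statement for $H^{-}$ then follows by replacing $(\bar u,\bar v)$ with $(-\bar u,\bar v)$. I would fix $\bar v\in\mathbb{R}$ and consider the two-input family $(u,w)\mapsto H^{+}(u,\bar v,w)=(H_{0}+\bar vH_{v})+uH_{u}+wH_{w}$, and show that for almost every $\bar v$ this family has weakly conically connected spectrum with rationally unrelated germs at every intersection. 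Theorem~\ref{theorem:main} (freezing the control $u$, keeping $w$) then yields, for almost every $\bar u$, controllability on the unitary group — here on $SU(3)$, since $i(H_{0}+\bar uH_{u}+\bar vH_{v})$ and $iH_{w}$ lie in $su(3)$ — of $i\dot\psi=H^{+}(\bar u,\bar v,w(t))\psi$, which is exactly the assertion. A Fubini argument over $\bar v$ shows that the exceptional set of pairs $(\bar u,\bar v)$ is Lebesgue-null, and intersecting with the corresponding set for $H^{-}$ concludes.

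For the two-input analysis I would use the fact that a traceless real symmetric $3\times3$ matrix has a double eigenvalue if and only if it is of the form $-\tfrac{d}{3}\,\mathrm{Id}+d\,\hat p\hat p^{\,T}$ with $\hat p$ a unit vector and $d\neq0$, its spectrum being $(-\tfrac{d}{3},-\tfrac{d}{3},\tfrac{2d}{3})$ if $d>0$ (so $\lambda_{1}=\lambda_{2}$) and $(-\tfrac{2|d|}{3},\tfrac{|d|}{3},\tfrac{|d|}{3})$ if $d<0$ (so $\lambda_{2}=\lambda_{3}$). Imposing that the diagonal equal $(E_{1},E_{2},E_{3})$ forces $p_{i}^{\,2}=(E_{i}+d/3)/d$, which is solvable for $|d|$ large for each choice of signs of the $p_{i}$, with off-diagonal entries $u=dp_{1}p_{2}$, $v=dp_{1}p_{3}$, $w=dp_{2}p_{3}$; as $d$ and the signs vary these trace out a finite union of real-analytic curves $\mathcal S\subset\mathbb{R}^{3}$, which is precisely the crossing locus of $H^{+}(\cdot)$. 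A short computation shows that for every $\bar v\neq0$ one can choose $|d|$ and the signs so that $v=\bar v$ on each of the two branches, hence in the plane $\{v=\bar v\}$ both pairs of consecutive eigenvalues do meet. Moreover $H^{+}(\cdot)$ is never a scalar matrix (its diagonal is non-constant), so no triple crossings occur; every crossing has multiplicity exactly two, and Definition~\ref{def:unrelated_germs} is then automatically satisfied at any isolated such crossing, since $\mathcal I(\bar u)$ is a singleton and $\lambda_{j+1}-\lambda_{j}$ is not identically zero near the crossing.

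What remains — and is the crux — is to verify that every crossing of $(u,w)\mapsto H^{+}(u,\bar v,w)$ is conical or weakly conical for almost every $\bar v$. At a point of $\mathcal S$ with double eigenvalue on the plane $\hat p^{\,\perp}$, the first-order splitting along $(\eta_{u},\eta_{w})$ is governed by the traceless part of $P(\eta_{u}H_{u}+\eta_{w}H_{w})P$ on $\hat p^{\,\perp}$, $P=\mathrm{Id}-\hat p\hat p^{\,T}$; writing the two $2\times2$ traceless symmetric matrices coming from $PH_{u}P$ and $PH_{w}P$ in an orthonormal frame of $\hat p^{\,\perp}$, conicality amounts to their linear independence, which is the non-vanishing of a non-trivial real-analytic function of $\bar v$ (through the dependence of $d$ and $\hat p$ on $\bar v$), hence holds outside a Lebesgue-null set of $\bar v$. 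For such $\bar v\neq0$, every crossing of the two-input family is conical, or — when $\{v=\bar v\}$ is merely tangent to $\mathcal S$ at a nondegenerate critical point of $v|_{\mathcal S}$ — weakly conical, and in all cases isolated; the spectrum of $(u,w)\mapsto H^{+}(u,\bar v,w)$ is thus weakly conically connected with rationally unrelated germs, and Theorem~\ref{theorem:main} applies as described. The main obstacle is exactly this last step: ensuring that, for almost every frozen $\bar v$, no "crossing curve'' of tangential (merely quadratic) splittings survives in the $(u,w)$-plane, which is what the explicit parameterization of $\mathcal S$ together with a transversality argument is used to rule out.
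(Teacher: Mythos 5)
Your overall strategy coincides with the paper's: freeze $\bar v$, show that the two-input family $(u,w)\mapsto H^{+}(u,\bar v,w)$ has a (weakly) conically connected spectrum with rationally unrelated germs, invoke Theorem~\ref{theorem:main} to get controllability for almost every $\bar u$, and conclude by Fubini (together with the reduction $H^{-}(u,v,w)=H^{+}(-u,v,w)$, which is fine). The gap is exactly where you place it yourself: the verification that every crossing of the frozen-$\bar v$ family is conical or weakly conical is not actually carried out. Your argument reduces conicality at a crossing to the linear independence of the traceless parts of $PH_{u}P$ and $PH_{w}P$ on $\hat p^{\,\perp}$ and then asserts that its failure is the zero set of a ``non-trivial real-analytic function of $\bar v$'' — but you never exhibit a single point of the crossing locus where independence holds, so non-triviality is unproven; and the fallback claim that tangency of the plane $\{v=\bar v\}$ to the crossing curve $\mathcal S$ at a nondegenerate critical point yields a \emph{weakly} conical intersection is not a valid criterion (weak conicality in Definition~\ref{def:weakly_conical_intersection} requires an isolated intersection admitting a conical direction, which is a condition on the first-order splitting, not on the contact order of the plane with $\mathcal S$). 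As written, the conclusion would only hold for those $\bar v$ outside an exceptional set that you have not shown to be null, and the subsequent Fubini step also needs the measurability of the good set of pairs $(\bar u,\bar v)$, which you do not address (the paper secures it by observing that the Lie-algebra-rank condition defines an open set).

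The paper closes this step by an explicit device that you could adopt: conjugating $H^{+}(u,\bar v,w)$ by the rotation $P(\theta)$ with $\theta=\arctan\bigl(2\bar v/(E_3-E_1)\bigr)$ in the $1$--$3$ plane kills the constant off-diagonal entry and turns the family into a tridiagonal one, $\bigl(\begin{smallmatrix}\tilde E_1 & \tilde u & 0\\ \tilde u & E_2 & \tilde w\\ 0 & \tilde w & \tilde E_3\end{smallmatrix}\bigr)$, with $(\tilde u,\tilde w)$ an orthogonal linear image of $(u,w)$. For such tridiagonal families the spectrum is conically connected with explicitly located, non-overlapping conical intersections, for \emph{every} $\bar v$ — no almost-everywhere argument in $\bar v$, no transversality analysis of $\mathcal S$, and the rationally-unrelated-germs condition is then automatic exactly as you note. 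Your parameterization of the crossing locus of traceless symmetric $3\times3$ matrices is correct and elegant, but it leaves the hard quantitative part open, whereas the rotation reduces the whole verification to a known example.
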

\begin{proof}
{ First, notice that the system characterized by $H^{+}(\bar{u},\bar{v},w(t))$ is controllable if and only if $\text{Lie}(iH^{+}(\bar{u},\bar{v},0),iH_{w})=su(3)$. Using this Lie group characterization, we can deduce that the subset
\begin{equation*}
    U_{C}=\left\{(\bar{u},\bar{v})\in\mathbb{R}^2\middle|~\text{Lie}(iH^{+}(\bar{u},\bar{v}),iH_w)=su(3)\right\}
\end{equation*}
is open. Its complement in $\mathbb{R}
^2$ is  thus closed and, in particular, measurable. L}et us fix $\bar{v}\in\mathbb{R}$ and define $H_{\bar{v}}(\cdot,\cdot)$ as
    \begin{equation*}
        H_{\bar{v}}(u,w)=H^{+}(u,\bar{v},w).
    \end{equation*}
    Since $E_1<E_3$, we can introduce the angle
    \begin{equation}
    \label{eq:def_theta}
        \theta=\arctan\left(\frac{2\bar{v}}{E_3-E_1}\right),
    \end{equation}
    and the unitary transformation 
    \begin{equation*}
        P(\theta)=\begin{pmatrix}
            \cos\left(\frac{\theta}{2}\right) & 0 & -\sin\left(\frac{\theta}{2}\right)\\
            0 & 1 & 0\\
            \sin\left(\frac{\theta}{2}\right) & 0 & \cos\left(\frac{\theta}{2}\right)
        \end{pmatrix}.
    \end{equation*}
    Define the transformed Hamiltonian 
    \begin{equation*}
        \Tilde{H}_{\bar{v}}(\tilde{u},\tilde{w})=P(\theta)H_{\bar{v}}(u,w)P^{\top}(\theta)=\begin{pmatrix}
            \Tilde{E}_1 & \tilde{u} & 0\\
            \tilde{u} & E_2 & \tilde{w}\\
            0 & \tilde{w} & \Tilde{E}_3
        \end{pmatrix},
    \end{equation*}
    where
    \begin{equation}\label{def:E1_E2}
        \begin{aligned}
            \tilde{u}&=\cos\left(\frac{\theta}{2}\right)u-\sin\left(\frac{\theta}{2}\right)w,\\ \tilde{w}&=\sin\left(\frac{\theta}{2}\right)u+\cos\left(\frac{\theta}{2}\right)w,\\
            \tilde{E}_1&=\frac{E_1+E_2-\sqrt{(E_1-E_2)^2+4\bar{v}^2}}{2},\\ \tilde{E_3}&=\frac{E_1+E_2+\sqrt{(E_1-E_2)^2+4\bar{v}^2}}{2}.
        \end{aligned}
    \end{equation}
    Denote by $\tilde{\lambda}_{1}(\tilde{u},\tilde{w})\leq \tilde{\lambda}_{2}(\tilde{u},\tilde{w})\leq\tilde{\lambda}_{3}(\tilde{u},\tilde{w})$ the eigenvalues of $\tilde{H}_{\bar{v}}(\tilde{u},\tilde{w})$ and by  $\lambda_1(u,w)\leq\lambda_{2}(u,v)\leq\lambda_{3}(u,v)$ those of $H_{\bar{v}}(u,w)$. Evidently, the spectrum of $\tilde{H}_{\bar{v}}(\tilde{u},\tilde{w})$ is conically connected with conical intersections between $\tilde{\lambda}_1$ and $\tilde{\lambda}_2$ at
    \begin{equation*}
    (\tilde{u},\tilde{w})=\left(\pm\sqrt{(\tilde{E}_3-\tilde{E}_1)(\tilde{E}_3-E_2)},0\right)
    \end{equation*}
    and conical intersections between $\tilde{\lambda}_2$ and $\tilde{\lambda}_3$ at 
    \begin{equation*}
        (\tilde{u},\tilde{w})=\left(0,\pm\sqrt{(\tilde{E}_3-\tilde{
    E}_1)(E_2-\tilde{E}_1)}\right).
    \end{equation*}
    Then we can deduce that the spectrum of $H_{\bar{v}}(u,w)$ is conically connected with conical intersections between $\lambda_1$ and $\lambda_2$ at
    \begin{equation}
    \label{eq:conical_intersection_1}
        (u,w)=\pm\sqrt{(\tilde{E}_3-\tilde{E}_1)(\tilde{E}_3-E_2)}\left(\cos\left(\frac{\theta}{2}\right),-\sin\left(\frac{\theta}{2}\right)\right),
    \end{equation}
    and conical intersections between $\lambda_2$ and $\lambda_3$ at
    \begin{equation}
    \label{eq:conical_intersection_2}
          (u,w)=\pm\sqrt{(\tilde{E}_3-\tilde{
    E}_1)(E_2-\tilde{E}_1)}\left(\sin\left(\frac{\theta}{2}\right),\cos\left(\frac{\theta}{2}\right)\right).
    \end{equation}
See Figure \ref{fig:example_2} for an illustrative example when $(E_1,E_2,E_3)=(-1.5,0.5,1)$ and $\bar{v}=3$.\\
    \begin{figure}[!t]
\centering
\includegraphics[width=0.6\columnwidth]{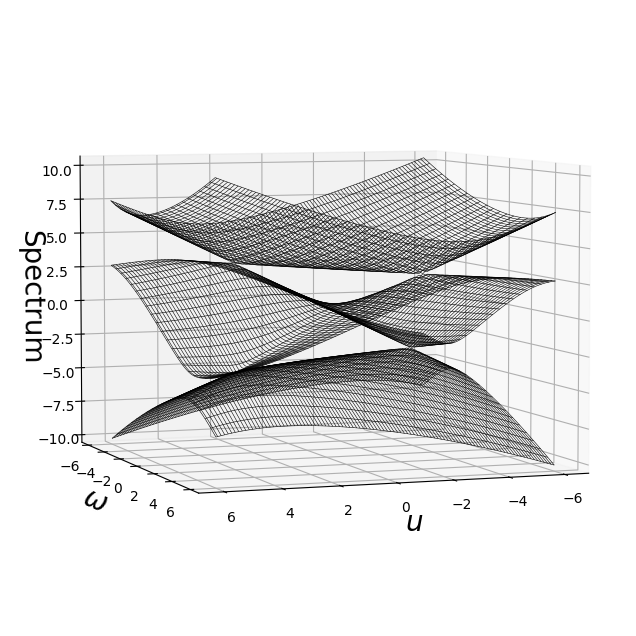}
    \caption{Case $(E_1,E_2,E_3)=(-1.5,0.5,1)$ and $\bar{v}=3$: the spectrum of $H^{+}(u,\bar{v},w)$ as a function of $(u,w)$ is conically connected}
    \label{fig:example_2}
\end{figure}
     By Theorem 4.1, we deduce that for almost every $\bar{u}\in\mathbb{R}$, the system characterized by $H_{\bar{v}}(\bar{u},w)$ is controllable with $w(\cdot)$, which is true for all $\bar{v}\in\mathbb{R}$. { 
Then we conclude by Fubini's theorem that the complement of $U_C$ in $\mathbb{R}^2$ has measure zero.
} Therefore, for almost every $(\bar{u},\bar{v})\in\mathbb{R}^2$, the system characterized by $H^{+}(\bar{u},\bar{v},w)$ in equation \eqref{eq:simultaneous} is controllable with $w(\cdot)$. 
     
     A similar reasoning shows that the system characterized by $H^{-}(\bar{u},\bar{v},w)$ in equation \eqref{eq:simultaneous} is also controllable.
\end{proof}

 For $\uu=(u,v,w)\in\mathbb{R}^{3}$, denote by $\lambda_1^{+}(\uu)\leq\lambda_2^{+}(\uu)\leq\lambda_3^{+}(\uu)$ the eigenvalues and by $\phi^{+}_{1}(\uu),\phi^{+}_2(\uu),\phi^{+}_3(\uu)$ { an orthonormal triple of corresponding eigenstates} of $H^{+}(\uu)$ and by $\lambda_{1}^{-}(\uu)\leq\lambda_{2}^{-}(\uu)\leq\lambda_3^{-}(\uu)$ and $\phi_{1}^{-}(\uu),\phi_2^{-}(\uu),\phi_3^{-}(\uu)$ those of $H^{-}(\uu)$. 
\begin{lemma}
\label{lemma:non_resonance}
    There exists a subset $U$ of $\mathbb{R}^3$ with zero-measure complement such that for all $\uu=(u,v,w)\in{ U}$,
   \begin{equation}
   \begin{aligned}
   \label{eq:non_resonance}
    \lambda^{+}_{2}(\uu)-\lambda^{+}_{1}(\uu)&\neq0,\quad\left\langle\phi_{1}^{+}(\uu),H_{w}
    \phi_2^{+}(\uu)\right\rangle\neq0,\\
    \lambda^{+}_2(\uu)-\lambda^{+}_1(\uu)&\notin\left\{\lambda_{k}^{-}(\uu)-\lambda_{j}^{-}(\uu)\mid 1\leq j<k\leq 3\right\}.
    \end{aligned}
\end{equation}
    \begin{proof}
    { Define $U$ as the set of $\uu=(u,v,w)\in\mathbb{R}^{3}$ such that all conditions in equation~\eqref{eq:non_resonance} are verified at $\uu$.} Let us fix $\bar{v}\in\mathbb{R}^{*}$. As noticed in  the proof of Proposition~\ref{prop:individual_controllability}, the spectrum of $H(u,\bar{v},w)$ as a function of $(u,w)\in\mathbb{R}^2$ is conically connected. Then, by Lemma~\ref{lemma:one_input_coupling}, there exists a dense subset $V_1(\bar{v})$ of $\mathbb{R}^2$ such that for all 
    $(u,w)\in V_1(\bar{v})$,
    \begin{equation*}
    \begin{aligned}
        \lambda_{2}^{+}(u,\bar{v},w)-\lambda_{1}^{+}(u,\bar{v},w)&\neq 0,\\
        \left\langle\phi_1^{+}(u,\bar{v},w),H_w\phi_2^{+}(u,\bar{v},w)\right\rangle&\neq 0.
    \end{aligned}
    \end{equation*}
Take a conical intersection $(\bar{u},\bar{w})$ between $\lambda^{+}_1(\cdot,\bar{v},\cdot)$ and $\lambda^{+}_2(\cdot,\bar{v},\cdot)$ as given in equation~\eqref{eq:conical_intersection_1}, with $\theta\in(-\pi,\pi)$ defined in equation \eqref{eq:def_theta} and $(\tilde{E}_1,\tilde{E}_3)$ defined in equation \eqref{def:E1_E2}. Since $\bar{v}\neq 0$, it can be checked that $(\bar{u},\bar{w})$ cannot be an eigenvalue intersection of the spectrum of $H^{-}(\cdot,\bar{v},\cdot)$, which means that
\begin{equation*}
    \lambda^{-}_{k}(\bar{u},\bar{v},\bar{w})-\lambda^{-}_{j}(\bar{u},\bar{v},\bar{w})\neq 0,\quad\mbox{ for } 1\leq j<k\leq 3.
\end{equation*}
Then, by the analyticity of the spectra of $H^{+}(\cdot,\bar{v},\cdot)$ and $H^{-}(\cdot,\bar{v},\cdot)$, we can deduce that there exists a subset $V_2(\bar{v})$ of $\mathbb{R}^2$ with zero-measure complement such that for all $(u,w)\in V_2(\bar{v})$
\begin{multline*}
    \lambda^{+}_{2}(u,\bar{v},w)-\lambda^{+}_{1}(u,\bar{v},w) \\ \notin\left\{\lambda_{k}^{-}(u,\bar{v},w)-\lambda_{j}^{-}(u,\bar{v},w)\mid 1\leq j<k\leq 3\right\}.
\end{multline*}
{ 
We observe that $U$ is a measurable subset of $\mathbb{R}^{3}$ and that
\begin{equation*}
    \cup_{\bar{v}\in\mathbb{R}^{*}}\{\bar{v}\}\times\left(V_1(\bar{v})\cap V_2(\bar{v})\right)\subset U,
\end{equation*} 
Finally, by Fubini's theorem, we conclude that the complement of $U$ in $\mathbb{R}^3$ has measure zero.}
\end{proof}
\end{lemma}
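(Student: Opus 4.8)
The plan is to construct $U$ one horizontal slice at a time in the middle variable $v$ and then glue the slices together with Fubini's theorem. Note first that $U$, being the intersection of sets of the form $\{\uu : f(\uu)\neq 0\}$ and $\{\uu : f(\uu)\neq g(\uu)\}$ with $f,g$ continuous (the ordered eigenvalues and the coupling matrix element depend continuously on $\uu$), is an open, hence Borel, subset of $\mathbb{R}^{3}$, so measurability poses no problem and Fubini applies. Since the hyperplane $\{v=0\}$ is Lebesgue-negligible, it suffices to prove that for every $\bar v\in\mathbb{R}^{*}$ the slice $U_{\bar v}:=\{(u,w)\in\mathbb{R}^{2} : (u,\bar v,w)\in U\}$ has zero-measure complement in $\mathbb{R}^{2}$.

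Fix $\bar v\neq 0$. In the proof of Proposition~\ref{prop:individual_controllability} it is shown that the spectrum of $H^{+}(\cdot,\bar v,\cdot)$, viewed as a function of $(u,w)$, is conically connected, with exactly two conical intersections between $\lambda_1^{+}$ and $\lambda_2^{+}$ located at the points given in~\eqref{eq:conical_intersection_1}. Because $H^{-}(u,v,w)=H^{+}(-u,v,w)$, the same statement holds for $H^{-}(\cdot,\bar v,\cdot)$; in particular the eigenvalue intersections of both $H^{+}(\cdot,\bar v,\cdot)$ and $H^{-}(\cdot,\bar v,\cdot)$ form discrete subsets of $\mathbb{R}^{2}$. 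Applying Lemma~\ref{lemma:one_input_coupling} to the two-input system with drift $H_0+\bar v H_v$ and controls $H_u,H_w$ — whose spectrum is conically, hence weakly conically, connected — yields a set $V_1(\bar v)\subset\mathbb{R}^{2}$ with zero-measure complement on which $\lambda_2^{+}(u,\bar v,w)-\lambda_1^{+}(u,\bar v,w)\neq 0$ and $\langle\phi_1^{+}(u,\bar v,w),H_w\phi_2^{+}(u,\bar v,w)\rangle\neq 0$. This disposes of the first two conditions in~\eqref{eq:non_resonance}.

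The remaining, genuinely substantial, point is the non-resonance condition. Let $O(\bar v)\subset\mathbb{R}^{2}$ be the complement of the (discrete, hence negligible) set of points at which either $H^{+}(\cdot,\bar v,\cdot)$ or $H^{-}(\cdot,\bar v,\cdot)$ has a multiple eigenvalue; then $O(\bar v)$ is open, connected (removing a discrete set from $\mathbb{R}^{2}$ does not disconnect it), of full measure, and on it all six eigenvalues are real-analytic. For each pair $1\le j<k\le 3$ the function $g_{jk}:=(\lambda_2^{+}-\lambda_1^{+})-(\lambda_k^{-}-\lambda_j^{-})$ is therefore real-analytic on $O(\bar v)$, so it either vanishes identically or has a negligible zero set. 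To exclude the first alternative I would take a conical intersection $(\bar u,\bar w)$ of $\lambda_1^{+},\lambda_2^{+}$ from~\eqref{eq:conical_intersection_1} and let $(u,w)$ tend to it within $O(\bar v)$ along a conical direction: there $\lambda_2^{+}-\lambda_1^{+}\to 0$, whereas — and here lies the one computation that must be done by hand — the point $(\bar u,\bar w)$ is \emph{not} an eigenvalue intersection of $H^{-}(\cdot,\bar v,\cdot)$ when $\bar v\neq 0$, so $\lambda_k^{-}-\lambda_j^{-}$ stays bounded away from zero near $(\bar u,\bar w)$ and $g_{jk}$ cannot vanish identically. Concretely, one substitutes the explicit coordinates~\eqref{eq:conical_intersection_1} together with~\eqref{eq:def_theta} and~\eqref{def:E1_E2} into the characteristic polynomial (equivalently, the discriminant) of $H^{-}$ and checks that it has no repeated root; it is exactly the sign flip $u\mapsto -u$ relating $H^{+}$ and $H^{-}$, combined with $\bar v\neq 0$, that forbids a crossing of $H^{+}$ from being a crossing of $H^{-}$. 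Intersecting $V_1(\bar v)$ with the three full-measure sets $\{g_{jk}\neq 0\}\cap O(\bar v)$ gives a slice with negligible complement, and since $\{\bar v\}\times\bigl(V_1(\bar v)\cap\bigcap_{j<k}\{g_{jk}\neq 0\}\bigr)\subset U$ for every $\bar v\neq 0$, Fubini's theorem finishes the argument.

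I expect the main obstacle to be precisely the explicit verification in the third paragraph that the conical intersections of the plus spectrum are not degeneracy points of the minus spectrum; everything else is a routine analyticity/Fubini argument in the spirit of the proof of Theorem~\ref{theorem:main}. A secondary point requiring a little care is the connectedness of $O(\bar v)$, which is what legitimizes the ``identically zero or negligible zero set'' dichotomy for the analytic functions $g_{jk}$.
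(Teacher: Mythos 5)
Your proposal is correct and follows essentially the same route as the paper's proof: slice in $\bar v\neq 0$, invoke the conical connectedness from Proposition~\ref{prop:individual_controllability} together with Lemma~\ref{lemma:one_input_coupling} for the gap and coupling conditions, use a conical intersection of the $+$ spectrum (which is not a degeneracy of the $-$ spectrum) as the witness that the analytic resonance functions do not vanish identically, and conclude by Fubini. The extra details you supply (measurability of $U$, connectedness of the slice minus the finite degeneracy set, the symmetry $H^{-}(u,v,w)=H^{+}(-u,v,w)$) are consistent with, and slightly more explicit than, the paper's argument.
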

\begin{lemma}
\label{lemma:no_isomorphism_1}
    There exists a subset $U$ of $\mathbb{R}^{3}$ with zero-measure complement such that, for all $\uu\in U$, there exists no Lie algebra isomorphism $f: {su}(3)\rightarrow {su}(3)$ such that $f\left(iH^{+}(\uu)\right)=iH^{-}(\uu)$ and $f\left(iH_w\right)=iH_w$.
\end{lemma}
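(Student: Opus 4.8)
The plan is to reduce the statement to the classification of automorphisms of $su(3)$ and then to a short entrywise computation. Recall that $su(3)$ is a compact simple Lie algebra of type $A_{2}$, whose Dynkin diagram has automorphism group $\mathbb{Z}_{2}$; consequently every Lie algebra automorphism $f$ of $su(3)$ is either \emph{inner}, $f(X)=gXg^{-1}$, or of the form $f(X)=g\bar{X}g^{-1}$, for some $g\in SU(3)$, where $\bar{X}$ denotes the entrywise complex conjugate and $X\mapsto\bar{X}=-X^{\top}$ is the standard outer automorphism of $su(3)$ (this description of $\mathrm{Aut}(su(3))$ is classical). Since $H^{+}(\uu)$, $H^{-}(\uu)$ and $H_{w}$ are real symmetric, we have $\overline{iH^{\pm}(\uu)}=-iH^{\pm}(\uu)$ and $\overline{iH_{w}}=-iH_{w}$, so the two requirements $f(iH^{+}(\uu))=iH^{-}(\uu)$ and $f(iH_{w})=iH_{w}$ become
\begin{equation}\label{eq:noiso-inner}
gH^{+}(\uu)g^{-1}=H^{-}(\uu),\qquad gH_{w}g^{-1}=H_{w}
\end{equation}
in the inner case and
\begin{equation}\label{eq:noiso-outer}
gH^{+}(\uu)g^{-1}=-H^{-}(\uu),\qquad gH_{w}g^{-1}=-H_{w}
\end{equation}
otherwise. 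I will show that \eqref{eq:noiso-outer} has no solution $g$ for \emph{any} $\uu$, and that \eqref{eq:noiso-inner} has no solution $g$ as soon as $u\neq0$ and $v\neq0$. Then $U=\{(u,v,w)\in\mathbb{R}^{3}\mid u\neq0,\ v\neq0\}$ has the desired property, since $\mathbb{R}^{3}\setminus U=\{u=0\}\cup\{v=0\}$ has zero Lebesgue measure.

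For the outer case \eqref{eq:noiso-outer}: the $\{2,3\}$-block of $H_{w}$ is invertible, hence $\ker H_{w}=\ker(-H_{w})=\mathbb{C}\,\textbf{e}_{1}$, and the second equation forces $g\textbf{e}_{1}\in\mathbb{C}\,\textbf{e}_{1}$. Taking the $(1,1)$-entry in the first equation of \eqref{eq:noiso-outer} then gives $E_{1}=\langle\textbf{e}_{1},H^{+}(\uu)\textbf{e}_{1}\rangle=\langle\textbf{e}_{1},gH^{+}(\uu)g^{-1}\textbf{e}_{1}\rangle=-\langle\textbf{e}_{1},H^{-}(\uu)\textbf{e}_{1}\rangle=-E_{1}$, i.e.\ $E_{1}=0$. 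This contradicts the hypotheses $E_{1}<E_{2}<E_{3}$ and $E_{1}+E_{2}+E_{3}=0$, which force $E_{1}<0$. So no automorphism of this type can intertwine the data, whatever $\uu$ is.

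For the inner case \eqref{eq:noiso-inner}: $g$ commutes with $H_{w}$, hence with its spectral projections. Since $H_{w}$ has the simple eigenvalues $0,1,-1$, with eigenvectors $\textbf{e}_{1}$, $\psi_{+}=(\textbf{e}_{2}+\textbf{e}_{3})/\sqrt{2}$, $\psi_{-}=(\textbf{e}_{2}-\textbf{e}_{3})/\sqrt{2}$, it follows that $g$ is diagonal in the basis $\{\textbf{e}_{1},\psi_{+},\psi_{-}\}$. Writing $gH^{+}(\uu)g^{-1}=H^{-}(\uu)$ in this basis and using that $H^{+}(\uu)$ and $H^{-}(\uu)$ coincide on $\mathrm{span}(\textbf{e}_{2},\textbf{e}_{3})$ (they differ only in the sign of the $(1,2)$- and $(2,1)$-entries), the $(\psi_{+},\psi_{-})$-entry yields, thanks to $E_{2}\neq E_{3}$, that the $\psi_{+}$- and $\psi_{-}$-eigenvalues of $g$ are equal; hence $g=\text{diag}(c_{1},c_{2},c_{2})$ in the canonical basis, with $|c_{1}|=|c_{2}|=1$. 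Finally, the $(1,2)$- and $(1,3)$-entries of $gH^{+}(\uu)g^{-1}=H^{-}(\uu)$ read $c_{1}\overline{c_{2}}\,u=-u$ and $c_{1}\overline{c_{2}}\,v=v$, which for $u\neq0$ and $v\neq0$ would require $c_{1}\overline{c_{2}}=-1$ and $c_{1}\overline{c_{2}}=1$ simultaneously — a contradiction. This establishes the claim.

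The only conceptual ingredient is the inner/outer description of the automorphism group of $su(3)$; after that, the outer case is immediate from $E_{1}\neq0$, and the inner case reduces to two entrywise identities, the essential elementary inputs being $E_{2}\neq E_{3}$ and that $H^{+}(\uu)$ and $H^{-}(\uu)$ differ in only one off-diagonal entry. The step needing slightly more care is the reduction from ``$g$ commutes with $H_{w}$'' to ``$g=\text{diag}(c_{1},c_{2},c_{2})$'': it uses both that commutation and the fact that conjugation by $g$ must fix the common $\{2,3\}$-block of $H^{\pm}(\uu)$, which forces the $\psi_{\pm}$ eigenvalues of $g$ to agree.
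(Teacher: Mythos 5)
Your proof is correct, but it follows a genuinely different route from the paper's. The paper argues via an $\mathrm{ad}$-invariant: using Lemma~\ref{lemma:non_resonance} it finds a full-measure set on which the gap $\lambda_2^+-\lambda_1^+$ is nonresonant with the spectrum of $H^-$ and the coupling $\langle\phi_1^+,H_w\phi_2^+\rangle$ is nonzero, then builds a polynomial $P$ with $\sum_p a_p\,\mathrm{ad}^{2p}_{iH^-(\uu)}(iH_w)=0$ but $\sum_p a_p\,\mathrm{ad}^{2p}_{iH^+(\uu)}(iH_w)\neq 0$, which no isomorphism fixing $iH_w$ can reconcile. You instead invoke the classification $\mathrm{Aut}(su(3))=\mathrm{Inn}(su(3))\rtimes\mathbb{Z}_2$ and dispose of both components by direct matrix computations: the outer case fails for every $\uu$ because $E_1\neq 0$, and the inner case fails on the explicit set $\{u\neq 0,\ v\neq 0\}$ because $g$ must commute with $H_w$ (hence, after using $E_2\neq E_3$, be of the form $\mathrm{diag}(c_1,c_2,c_2)$), and then the $(1,2)$- and $(1,3)$-entries force $c_1\overline{c_2}=-1$ and $c_1\overline{c_2}=1$ simultaneously. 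Your computations check out (in particular $\langle\psi_+,H^\pm\psi_-\rangle=(E_2-E_3)/2$ depends only on the common $\{2,3\}$-block, and $E_1<0$ follows from $E_1<E_2<E_3$ with zero trace). What each approach buys: yours is self-contained, bypasses Lemma~\ref{lemma:non_resonance} entirely, and yields an explicit open full-measure set independent of $w$ (which would make the subsequent Fubini-type reduction in Lemma~\ref{lemma:no_isomorphism_2} immediate); the paper's argument, by contrast, does not rely on knowing the automorphism group and therefore scales to higher-dimensional or infinite-dimensional analogues where no such classification is usable, and it reuses the spectral machinery already in place.
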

\begin{proof}
    By Lemma \ref{lemma:non_resonance}, there exists a subset $U$ of $\mathbb{R}^3$ with zero-measure complement such that the conditions in \eqref{eq:non_resonance} are true for all $\uu\in U$. Notice that for $p\in\mathbb{N}$,
    \begin{equation*}
        \begin{aligned}
            \text{ad}^{2p}_{iH^{+}(\uu)}(iH_w)&=\sum_{j,k=1}^3i\big(-(\lambda^{+}_{j}(\uu)-\lambda^{+}_{k}(\uu))^2\big)^{p}\left\langle \phi^{+}_{j}(\uu),H_w\phi^{+}_{k}(\uu)\right\rangle
            \phi^{+}_j(\uu)\left(\phi^{+}_k(\uu)\right)^{\dagger}           
            ,\\
            \text{ad}^{2p}_{iH^{-}(\uu)}(iH_w)&=\sum_{j,k=1}^3i\big(-(\lambda^{-}_{j}(\uu)-\lambda^{-}_{k}(\uu))^2\big)^{p}\left\langle \phi^{-}_{j}(\uu),H_w\phi^{-}_{k}(\uu)\right\rangle
\phi^{ -}_j(\uu)\left(\phi^{ -}_k(\uu)\right)^{\dagger}.            
        \end{aligned}
    \end{equation*}
Take a polynomial $P(X)=\sum_{p=0}^{m}a_pX^{p}$ such that $P(x)=0$ if $x\in\{0,-(\lambda_{1}^{-}(\uu)-\lambda^{-}_{2}(\uu))^2,-(\lambda^{-}_1(\uu)-\lambda^{-}_{3}(\uu))^2,-(\lambda^{-}_2(\uu)-\lambda^{-}_{3}(\uu))^2\}$ and $P(x)\neq 0$ otherwise. Then  
\begin{equation*}
    \sum_{p=0}^{m}a_{p}\text{ad}^{2p}_{iH^{-}(\uu)}(iH_w)=0,
\end{equation*}
and, according to equation \eqref{eq:non_resonance}, 
\begin{equation*}
    \sum_{p=0}^{m}a_{p}\text{ad}^{2p}_{iH^{+}(\uu)}(iH_w)\neq 0.
\end{equation*}
Assume by contradiction that there exists a Lie algebra isomorphism $f: {su}(3)\rightarrow {su}(3)$ such that $f\left(iH^{+}(\uu)\right)=iH^{-}(\uu)$ and $f(iH_w)=iH_w$. Then we  have
\begin{align*}
    f\left(\sum_{p=0}^{m}a_{p}\text{ad}^{2p}_{iH^{+}(\uu)}(iH_w)\right)&=\sum_{p=0}^{m}a_{p}\text{ad}^{2p}_{f(iH^{+}(\uu))}(f(iH_w))\\
    &=\sum_{p=0}^{m}a_{p}\text{ad}^{2p}_{iH^{-}(\uu)}(iH_w)=0.
\end{align*}
The contradiction is reached since $f^{-1}(0)=0$. 
\end{proof}
\begin{lemma}
\label{lemma:no_isomorphism_2}
    There exists a subset $V$ of $\mathbb{R}^2$ with zero-measure complement such that for each $(u,v)\in V$ there exists no Lie algebra isomorphism $f: {su}(3)\rightarrow {su}(3)$ such that $f(iH^{+}(u,v,0))=iH^{-}(u,v,0)$ and $f(iH_w)=iH_w$.
\end{lemma}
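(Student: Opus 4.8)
The plan is to derive the statement from Lemma~\ref{lemma:no_isomorphism_1} rather than re-running its proof on the slice $\{w=0\}$. A direct re-run is in fact obstructed in two ways: the full-measure set produced by Lemma~\ref{lemma:no_isomorphism_1} could a priori meet the plane $\{w=0\}$ in a null set, and, worse, $H^{+}(u,v,0)$ and $H^{-}(u,v,0)$ are conjugate by $\mathrm{diag}(1,-1,1)$, hence isospectral, so the non-resonance property exploited in the proof of Lemma~\ref{lemma:no_isomorphism_1} (namely $\lambda_2^+-\lambda_1^+\notin\{\lambda_k^--\lambda_j^-\}$) cannot hold at any point of $\{w=0\}$.

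First I would isolate the one structural fact that makes the reduction work: for fixed $(u,v)$, the existence of a Lie algebra isomorphism $f:{su}(3)\to{su}(3)$ with $f(iH^{+}(u,v,w))=iH^{-}(u,v,w)$ and $f(iH_{w})=iH_{w}$ does not depend on $w$. This is immediate from $\mathbb{R}$-linearity of $f$ together with the identity $H^{\pm}(u,v,w)-H^{\pm}(u,v,w')=(w-w')H_{w}$: if such an $f$ exists at $(u,v,w)$, then $f(iH^{+}(u,v,w'))=f(iH^{+}(u,v,w))+(w'-w)f(iH_{w})=iH^{-}(u,v,w)+(w'-w)iH_{w}=iH^{-}(u,v,w')$, while $f(iH_{w})=iH_{w}$ is unchanged. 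Consequently, letting $A\subseteq\mathbb{R}^{2}$ denote the set of $(u,v)$ for which such an $f$ exists at $(u,v,0)$, the set $A\times\mathbb{R}$ coincides exactly with the set of $\uu\in\mathbb{R}^{3}$ admitting a Lie algebra isomorphism $f$ with $f(iH^{+}(\uu))=iH^{-}(\uu)$ and $f(iH_{w})=iH_{w}$.

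Then I would conclude via Lemma~\ref{lemma:no_isomorphism_1} and Fubini's theorem. By that lemma, $A\times\mathbb{R}$ is disjoint from a subset of $\mathbb{R}^{3}$ with zero-measure complement, hence $A\times\mathbb{R}$ is contained in a Lebesgue-null subset of $\mathbb{R}^{3}$. Since the fiber of $A\times\mathbb{R}$ over a point $(u,v)$ is either all of $\mathbb{R}$ (when $(u,v)\in A$) or empty, Fubini applied to a measurable null hull of $A\times\mathbb{R}$ forces $A$ to be Lebesgue-null. Taking $V=\mathbb{R}^{2}\setminus A$ then yields the claim.

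The entire argument rests on the $w$-independence observation of the second paragraph; everything else is bookkeeping. The only point requiring a little care is that one should not assume $A$ measurable a priori: it suffices to work with a measurable null superset of $A\times\mathbb{R}$ and invoke completeness of the Lebesgue measure, so no separate measurability discussion for $A$ (e.g.\ via compactness of $\mathrm{Aut}({su}(3))$ and Tarski--Seidenberg) is needed.
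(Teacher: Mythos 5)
Your proposal is correct and follows essentially the same route as the paper's proof: both rest on the observation that, by $\mathbb{R}$-linearity of $f$ and the identity $H^{\pm}(u,v,w)=H^{\pm}(u,v,0)+wH_{w}$, the existence of such an isomorphism at $(u,v,w)$ is independent of $w$, and both then deduce from Lemma~\ref{lemma:no_isomorphism_1} via a Fubini-type comparison of the measure of $A\times\mathbb{R}$ in $\mathbb{R}^{3}$ with that of $A$ in $\mathbb{R}^{2}$ (the paper phrases this as a proof by contradiction). Your handling of the possible non-measurability of $A$ through a measurable null hull and completeness of the Lebesgue measure is in fact slightly more careful than the paper's wording, which implicitly works with outer measure.
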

\begin{proof}
    Assume by contradiction that
    there exists $V^c\subset \mathbb{R}^2$ with  non-zero measure such that for each $({u},{v})\in V^{c}$  there exists a Lie algebra isomorphism $f$ satisfying $f(iH^{+}({u},{v},0))=iH^{-}(u,v,0)$ and $f(iH_w)=iH_w$. Then by the linearity of $f$, for all $w\in\mathbb{R}$, we have $f(iH^{+}(u,v,w))=iH^{-}(u,v,w)$ and $f(iH_w)=iH_w$. Then such isomorphism exists for all 
    \begin{equation*}
         \uu=(u,v,w)\in V^{c}\times \mathbb{R},
    \end{equation*}
and $V^{c}\times\mathbb{R}$ is a subset of $\mathbb{R}^3$ with non-zero measure. This is in contradiction with Lemma \ref{lemma:no_isomorphism_1}, concluding the proof.
\end{proof}
\begin{prop}
    For almost every $(\bar{u},\bar{v})\in\mathbb{R}^2$, the two systems in equation \eqref{eq:simultaneous} are simultaneously controllable with control $w(\cdot)$.
\end{prop}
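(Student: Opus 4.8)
The plan is to reduce simultaneous controllability to a single Lie-algebraic identity on $su(3)\oplus su(3)$ and then dispose of the only possible obstruction using Goursat's lemma together with Lemma~\ref{lemma:no_isomorphism_2}. Fix $(\bar u,\bar v)$ in the intersection of the full-measure set provided by Proposition~\ref{prop:individual_controllability} (on which each of $H^{+}(\bar u,\bar v,\cdot)$ and $H^{-}(\bar u,\bar v,\cdot)$ is controllable with $w(\cdot)$) and the full-measure set $V$ provided by Lemma~\ref{lemma:no_isomorphism_2}; this intersection still has zero-measure complement in $\mathbb{R}^{2}$. Since $iH^{+}(\bar u,\bar v,0),\,iH^{-}(\bar u,\bar v,0),\,iH_{w}\in su(3)$, the lift of the pair of systems in~\eqref{eq:simultaneous} evolves on the compact connected group $SU(3)\times SU(3)$, and by the classical controllability criterion for right-invariant systems on compact Lie groups with an unbounded real control (see~\cite{JURDJEVIC1972313}), simultaneous controllability of the two systems in~\eqref{eq:simultaneous} with the single control $w(\cdot)$ is equivalent to
\[
\mathfrak{g}:=\mathrm{Lie}\Big\{\big(iH^{+}(\bar u,\bar v,0),\,iH^{-}(\bar u,\bar v,0)\big),\ \big(iH_{w},iH_{w}\big)\Big\}=su(3)\oplus su(3).
\]

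Next I would observe that $\mathfrak{g}$ is a subalgebra of $su(3)\oplus su(3)$ whose projection onto the first factor equals $\mathrm{Lie}\{iH^{+}(\bar u,\bar v,0),iH_{w}\}=su(3)$ (by the choice of $(\bar u,\bar v)$ in the controllability set of Proposition~\ref{prop:individual_controllability}), and likewise its projection onto the second factor equals $su(3)$. Thus $\mathfrak{g}$ surjects onto both factors. Now apply Goursat's lemma: set $\mathfrak{k}_{1}=\{X\in su(3)\mid (X,0)\in\mathfrak{g}\}$ and $\mathfrak{k}_{2}=\{Y\in su(3)\mid (0,Y)\in\mathfrak{g}\}$. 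A direct bracket computation shows that $\mathfrak{k}_{1}$ and $\mathfrak{k}_{2}$ are ideals of $su(3)$, hence, $su(3)$ being simple, each is either $\{0\}$ or $su(3)$. If $\mathfrak{k}_{1}=su(3)$ or $\mathfrak{k}_{2}=su(3)$, then $\mathfrak{g}$ contains $su(3)\oplus 0$ or $0\oplus su(3)$ and, being surjective on both factors, $\mathfrak{g}=su(3)\oplus su(3)$, as desired. Otherwise $\mathfrak{k}_{1}=\mathfrak{k}_{2}=\{0\}$, so the first projection restricts to a Lie algebra isomorphism $\mathfrak{g}\to su(3)$, and $\mathfrak{g}$ is the graph of a Lie algebra isomorphism $f:su(3)\to su(3)$. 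Since $(iH_{w},iH_{w})\in\mathfrak{g}$ and $(iH^{+}(\bar u,\bar v,0),iH^{-}(\bar u,\bar v,0))\in\mathfrak{g}$, this forces $f(iH_{w})=iH_{w}$ and $f(iH^{+}(\bar u,\bar v,0))=iH^{-}(\bar u,\bar v,0)$, contradicting $(\bar u,\bar v)\in V$ via Lemma~\ref{lemma:no_isomorphism_2}. Hence the graph case is impossible and $\mathfrak{g}=su(3)\oplus su(3)$, which gives simultaneous controllability for every $(\bar u,\bar v)$ in this full-measure set.

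The main obstacle is conceptual rather than computational: correctly invoking the Goursat dichotomy for real Lie algebras, which hinges on the simplicity of $su(3)$ to force each $\mathfrak{k}_i\in\{0,su(3)\}$ and thus reduce to the ``graph'' alternative; once this is in place, the identification of that alternative with the isomorphism forbidden by Lemma~\ref{lemma:no_isomorphism_2} is immediate. A minor point still to be spelled out is the equivalence between simultaneous controllability of~\eqref{eq:simultaneous} and the Lie algebra rank condition $\mathfrak{g}=su(3)\oplus su(3)$ on $SU(3)\times SU(3)$ with one unbounded real control, which is standard for compact connected groups; and that the measurability of the relevant sets, needed to speak of ``almost every $(\bar u,\bar v)$'', follows as in the proof of Proposition~\ref{prop:individual_controllability} from the openness of the set where the Lie algebra generated has full dimension.
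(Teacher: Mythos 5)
Your proof is correct, and its overall architecture coincides with the paper's: intersect the full-measure set from Proposition~\ref{prop:individual_controllability} (individual controllability of each enantiomer) with the full-measure set $V$ from Lemma~\ref{lemma:no_isomorphism_2} (non-existence of an intertwining Lie algebra isomorphism fixing $iH_w$), and conclude simultaneous controllability. The one place you diverge is the final step: the paper simply cites Lemma~3 of \cite{MR3318240}, which is precisely the statement that two individually controllable $su(n)$-systems sharing a control Hamiltonian are simultaneously controllable unless such an intertwining isomorphism exists, whereas you prove that implication from scratch via Goursat's lemma on $su(3)\oplus su(3)$. Your argument is sound: the subalgebra $\mathfrak{g}$ generated by $(iH^{+},iH^{-})$ and $(iH_w,iH_w)$ surjects onto both factors (its projections are exactly the Lie algebras $\mathrm{Lie}\{iH^{\pm},iH_w\}=su(3)$), the kernels $\mathfrak{k}_1,\mathfrak{k}_2$ are ideals, simplicity of $su(3)$ forces the dichotomy, and the graph alternative is exactly the isomorphism excluded on $V$. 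What your route buys is self-containedness — the reader sees why the ``no isomorphism'' condition is the right obstruction — at the cost of a longer proof; what the paper's route buys is brevity and applicability of the cited lemma in the general $su(n)$ setting without re-deriving it. Both correctly handle measurability by intersecting two sets already known to have zero-measure complement.
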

\begin{proof}
    By Proposition \ref{prop:individual_controllability}, there exists a subset $V_1$ of $\mathbb{R}^2$ with zero-measure complement, such that for all $(\bar{u},\bar{v})\in V_1$, each system in equation \eqref{eq:simultaneous} is controllable with control $w(\cdot)$. Moreover by Lemma~\ref{lemma:no_isomorphism_2}, there exists a subset $V_2$ of $\mathbb{R}^{2}$ with zero-measure complement such that for all $(\bar{u},\bar{v})\in { V_2}$, there exists no Lie algebra isomorphism $f: {su}(3)\rightarrow {su}(3)$ such that $f(iH^{+}(\bar{u},\bar{v},0))=iH^{-}(\bar{u},\bar{v},0)$ and $f(iH_w)=iH_w$. 
    Since $V=V_1\cap V_2$ is a dense subset of $\mathbb{R}^2$ with zero measure complement, the conclusion follows from Lemma~3 in \cite{MR3318240}. 
\end{proof}
{Applying the proven result to the scenario of enantio-selective controllability subject to two continuous wave IR fields and one time-dependent microwave pulse, modeled in the rotating wave approximation, we find controllability for $\delta_{12}=\delta_{23}$. For almost all static values $\bar{u}$ and $\bar{v}$ in \eqref{eq:simultaneous} (i.e. $(\bar{u},\bar{v})$ in a subset of $\mathbb{R}^2$ with zero-measure complement), with $\bar{u}$ and
$\bar{v}$ denoting Rabi frequencies $\Omega_{12}=\mu_{12} {\cal E}_{12}$
and $\Omega_{13}=\mu_{13} {\cal E}_{13}$, 
the time-dependent control $w(t)$ yields  enantio-selective
controllability.}

\section{Application:  Driven Jaynes--Cummings Hamiltonian}
\label{s-jaynes}
{The Jaynes-Cummings Hamiltonian is a paradigmatic model in quantum optics. It describes a two-level system coupled to a quantum harmonic oscillator where the coupling is sufficiently weak to allow for the rotating wave approximation. Experimental realizations include atoms or molecules interacting with a quantized mode of the electromagnetic field in a cavity or internal states of trapped ions which are coupled to the quantized motion in the ion trap. Without external drive, the model is analytically solvable.}

Here we consider,  in the Hilbert space  $\mathcal{H}=L^{2}(\mathbb{R})\otimes\mathbb{C}^{2}$, the driven Jaynes--Cummings system
\begin{equation}
    \label{eq:system_3}
    i\dot{\psi}(t)=H_{\rm JC}(u(t))\psi(t),
\end{equation}
where the control $u(\cdot)$ (typically a classical electromagnetic field) is  piecewise constant and takes values in $\mathbb{R}$. 
The driven Jaynes--Cummings Hamiltonian 
is defined as
{ 
\begin{equation*}
\begin{aligned}
    H_{\rm JC}(u)=&\omega \left(a^{\dagger}a+\frac{1}{2}\right)\otimes\mathbb{I}_{\mathbb{C}^2}+\frac{\Omega}{2}\mathbb{I}_{L^{2}(\mathbb{R})}\otimes\sigma_{z}\\&+g\left(a\otimes\sigma_{+}+a^{\dagger}\otimes\sigma_{-}\right)+u\left(a^{\dagger}+a\right)\otimes\mathbb{I}_{\mathbb{C}^2},
\end{aligned}
\end{equation*}
where $\omega,\Omega>0$ and $g\in\mathbb{R}$ are scalar parameters of the system. Here, $\mathbb{I}_{L^{2}(\mathbb{R})}$ and $\mathbb{I}_{\mathbb{C}^2}$ denote the identity operators on the Hilbert spaces $L^{2}(\mathbb{R})$ and $\mathbb{C}^2$, respectively.}
 $a^{\dagger}$ and $a$ represent the creation and annihilation operators for the harmonic oscillator, that is,
\begin{equation}
\label{eq:creation_annihilation}
    a^{\dagger}=\frac{1}{\sqrt{2}}\left(x-\frac{\partial}{\partial x}\right),\quad  a=\frac{1}{\sqrt{2}}\left(x+\frac{\partial}{\partial x}\right).
\end{equation}
$\sigma_{z}$ is the Pauli matrix $(\begin{smallmatrix}1&0\\0&-1\end{smallmatrix})$,
whose eigenvectors are denoted by $e_{1},e_{-1}$, and $\sigma_{-}$ and $\sigma_{+}$ are given by
\begin{equation*}
    \sigma_{-}=\ket{e_{-1}}\bra{e_{1}}=\begin{pmatrix}
        0 & 0\\
        1 & 0
    \end{pmatrix},\quad\sigma_{+}=\ket{e_{1}}\bra{e_{-1}}=\begin{pmatrix}
        0 & 1\\
        0 & 0
    \end{pmatrix}.
\end{equation*}
{ In the rest of this section, following the classical physical notation, we omit the evident tensor products and identity operators, which leads to the expression:}
\begin{equation*}
    H_{\rm JC}(u)=\omega \left(a^{\dagger}a+\frac{1}{2}\right)+\frac{\Omega}{2}\sigma_{z}+g(a\sigma_{+}+a^{\dagger}\sigma_{-})+u(a^{\dagger}+a).
\end{equation*}
We can deduce from the results of the previous sections the following result on the controllability of the driven Jaynes--Cummings system.
\begin{theorem}
\label{theorem:Controllability_JC}
    For almost every $\Omega,\omega>0$ and $g\in\mathbb{R}$, the driven Jaynes--Cummings system \eqref{eq:system_3} is approximately controllable { on} the unitary group.
\end{theorem}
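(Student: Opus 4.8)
The plan is to present the driven Jaynes--Cummings Hamiltonian as the single-input system obtained from a two-input family by freezing the coupling constant $g$, and then to invoke Theorem~\ref{theorem:main}. Write $H_{\rm JC}(u)=H_0+gH_1+uH_2$ with $H_0=\omega(a^\dagger a+\tfrac12)+\tfrac{\Omega}{2}\sigma_z$, $H_1=a\sigma_++a^\dagger\sigma_-$ and $H_2=a^\dagger+a$, and consider the two-input family $H(u_1,u_2)=H_0+u_1H_1+u_2H_2$ on $U=\mathbb R^2$. First I would check that $(\mathbf H^\infty)$ holds: $\mathcal H=L^2(\mathbb R)\otimes\mathbb C^2$ is separable and infinite dimensional; $H_0$ is self-adjoint, bounded below, and has compact resolvent, because the number operator $a^\dagger a$ has discrete spectrum tending to $+\infty$ and tensoring with $\mathbb I_{\mathbb C^2}$ and adding the bounded term $\tfrac{\Omega}{2}\sigma_z$ preserves this; and $H_1,H_2$ are symmetric on $D(H_0)$ and Kato-small with respect to $H_0$, since $a$ and $a^\dagger$ are infinitesimally bounded relative to $a^\dagger a$. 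As $I^1=\pi_1(\mathbb R^2)=\mathbb R$, Theorem~\ref{theorem:main} will then give, for every $(\Omega,\omega)$ for which the spectrum of $H(\cdot)$ is weakly conically connected with rationally unrelated germs, approximate controllability on the unitary group of $i\dot\psi=H(g,u(t))\psi=H_{\rm JC}(u(t))\psi$ for almost every $g\in\mathbb R$; Fubini's theorem then lifts this to almost every $(\Omega,\omega,g)$ (the approximate-controllability predicate is measurable in $(\Omega,\omega,g)$, being governed by the non-vanishing of functions analytic in these parameters away from the eigenvalue-intersection set, as in the criterion used in the proof of Theorem~\ref{theorem:main}).

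It thus remains to establish the spectral hypotheses for almost every $(\Omega,\omega)$. On the line $\{u_2=0\}$, $H(u_1,0)=H_0+u_1H_1$ is the undriven Jaynes--Cummings Hamiltonian; it commutes with the excitation-number operator $N=a^\dagger a+\sigma_+\sigma_-$ and is therefore block-diagonal with $2\times2$ blocks, its eigenvalues being the branches $E_{n,\pm}(u_1)=\omega(n+1)\pm\tfrac12\sqrt{(\Omega-\omega)^2+4u_1^2(n+1)}$, $n\ge0$, together with one branch constant in $u_1$ and equal to $(\omega-\Omega)/2$ (the eigenvalue of the vector annihilated by $H_1$). A crossing $E_{n,\nu}(u_1)=E_{m,s}(u_1)$ forces $x:=u_1^2/\omega^2$ to be a root of $x^2-2(n+m+2)x+(n-m)^2-\big((\Omega-\omega)/\omega\big)^2=0$, so if $\big((\Omega-\omega)/\omega\big)^2\notin\mathbb Q$, which holds for almost every $(\Omega,\omega)$, then comparing the quadratics of two distinct colliding pairs and using $x\notin\mathbb Q$ forces the pairs to coincide; hence at every $u_1$ at most one consecutive pair of eigenvalues can collide, and moreover $\Omega/\omega\notin\mathbb Q$, so $H_0$ has simple spectrum and no crossing occurs at $u_1=0$. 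For $u_2\neq0$ I would complete the square via the displacement $\tilde a=a+u_2/\omega$, turning $H(u_1,u_2)$, up to an additive constant, into the Jaynes--Cummings Hamiltonian in the variable $\tilde a$ with coupling $u_1$ plus a term proportional to $u_1u_2\,\sigma_x$; the simplicity lemma of \cite{liang:hal-04174206} for such perturbed Jaynes--Cummings Hamiltonians --- valid both when $u_1=0$ (the spectrum being that of a displaced oscillator tensored with the spin, simple because $\Omega/\omega\notin\mathbb Q$) and when $u_1\neq0$ (the $\sigma_x$-coefficient being then nonzero as well) --- shows that $H(u_1,u_2)$ has simple spectrum whenever $u_2\neq0$.

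Combining these facts, the eigenvalue intersections of $H(\cdot)$ form a discrete subset of $\{u_2=0\}$, at each point $\bar u=(\bar u_1,0)$ of which exactly one consecutive pair of eigenvalues coincides. At such a point the two colliding analytic branches of $H(\cdot,0)$ cross transversally: differentiating the explicit formula gives $E_{n,\nu}'(u_1)=\frac{u_1(n+1)}{E_{n,\nu}(u_1)-\omega(n+1)}$, so at a crossing with common value $\Lambda$ the two slopes agree only if $(n-m)\Lambda=0$, i.e.\ only if $\Lambda=0$; but no crossing can sit at energy zero, since an ascending branch $E_{m,+}$ is everywhere positive, two descending branches $E_{n,-},E_{m,-}$ with $n\neq m$ cannot both vanish at a common $u_1$, and the constant branch equals $(\omega-\Omega)/2\neq0$, while crossings involving the constant branch are transversal because $\bar u_1\neq0$ makes $E_{n,\nu}'(\bar u_1)\neq0$. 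Hence $\eta=(1,0)$ is a conical direction at $\bar u$, and since $\bar u$ is isolated in $\{\lambda_j=\lambda_{j+1}\}$ (isolated on $\{u_2=0\}$ as the two branches are analytic and distinct, and the spectrum being simple off $\{u_2=0\}$), every intersection is conical or weakly conical. The rationally-unrelated-germs condition is automatic: only one consecutive pair collides at each intersection point, so the single gap function $\lambda_{j+1}(\cdot)-\lambda_j(\cdot)$, not being identically zero near $\bar u$, is a rationally independent family. Finally, for every $j$ the eigenvalues $\lambda_j(\cdot)$ and $\lambda_{j+1}(\cdot)$ coincide somewhere on $\{u_2=0\}$: the descending branches, ordered together with the constant branch, meet successively (each $E_{n+1,-}$ lies above $E_{n,-}$ at $u_1=0$ but diverges to $-\infty$ faster, so they cross by the intermediate value theorem, and similarly the constant branch meets $E_{0,-}$), while each ascending branch $E_{n,+}$ crosses some sufficiently distant descending branch $E_{m,-}$; thus the ``crossing graph'' on the branches of $H(\cdot,0)$ is connected, and a connected crossing graph forces every consecutive pair of sorted eigenvalues to coincide somewhere --- otherwise the set of branches realizing the lowest $j$ eigenvalues would be constant in $u_1$ and yet, in the crossing graph, disjoint from its complement.

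I expect the main obstacle to be establishing the simplicity of the spectrum off $\{u_2=0\}$, which hinges on invoking the perturbed-Jaynes--Cummings simplicity lemma of \cite{liang:hal-04174206} in precisely the generality required --- in particular on treating separately the degenerate locus $u_1=0$, $u_2\neq0$ --- together with the careful branch bookkeeping behind the connectedness argument; the remaining points (verification of $(\mathbf H^\infty)$, the non-overlapping and transversality of crossings, and the Fubini step) are comparatively routine.
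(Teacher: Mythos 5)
Your proposal follows essentially the same route as the paper: it introduces the two-input family $H(u_1,u_2)=H_0+u_1(a\sigma_++a^\dagger\sigma_-)+u_2(a^\dagger+a)$, proves simplicity of the spectrum for $u_2\neq 0$ via the displacement $\tilde a=a+u_2/\omega$ and the perturbed Jaynes--Cummings simplicity result of \cite{liang:hal-04174206}, establishes weak conical connectedness and non-overlapping intersections on $\{u_2=0\}$ through the explicit branches $E_{n,\nu}$ and the irrationality of $((\Omega-\omega)/\omega)^2$, and then invokes Theorem~\ref{theorem:main} with $g$ as the frozen input. This matches the paper's Lemma~\ref{lemma:non-degenerate}, Proposition~\ref{prop:weakly-conical} and the final argument, with your write-up merely filling in some details (the transversality computation, the crossing-graph connectedness argument, and the Fubini step in the parameters) that the paper leaves implicit.
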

This result, under a slightly different assumption, has already been proven in \cite{10.1063/1.5023587}. Here we propose an alternative proof relying on weak conical connectedness.

{ Before proving Theorem \ref{theorem:Controllability_JC}, let us first consider the two-input Hamiltonian}
\begin{equation}
\label{eq:Hamiltonian_two_inputs}
\begin{aligned}
H(u_{1},u_{2})=&H_{0}+u_{1}H_{1}+u_{2}H_{2}\\
    =&\omega \left(a^{\dagger}a+\frac{1}{2}\right)+\frac{\Omega}{2}\sigma_{z}+u_{1}(a\sigma_{+}+a^{\dagger}\sigma_{-})+u_{2}{(a^{\dagger}+a)}.
\end{aligned}
\end{equation}
The associated bilinear Schr\"odinger equation is given by
\begin{equation}
    \label{eq:system_2}
    i\dot{\psi}(t)=H(u(t))\psi(t)=H(u_{1}(t),u_{2}(t))\psi(t).
\end{equation}
Here $u(\cdot)$ is a piecewise constant function that takes its values in $U=\mathbb{R}^{2}$. {It is important to notice that $H(g,u)=H_{\rm JC}(u)$. The parameter $g$ can thus be considered as a static control for system \eqref{eq:system_2}. In the following, we will first prove that the spectrum of $H(u_1,u_2)$ is weakly conically connected and has rationally unrelated germs at each intersection point. Then we will apply Theorem \ref{theorem:main} to conclude the proof of Theorem \ref{theorem:Controllability_JC}.}

Let us notice that $H_{1},H_{2}$ are infinitesimally small with respect to $H_{0}$ in the sense of Kato \cite{kato1966perturbation}. The eigenfunctions of $a^{\dagger}a$ are the Hermite functions
\begin{equation*}
    \ket{n}=\Phi_{n}(\cdot)=\left(x\mapsto \frac{1}{\sqrt{2^{n}n!\sqrt{\pi}}}h_{n}(x)e^{-\frac{x^{2}}{2}}\right),\quad n\in\mathbb{N},
\end{equation*}
where $h_{n}$ denotes the $n$th Hermite polynomial. An analysis of the spectrum of $H_{0}$ can be conducted as in \cite{10.1063/1.5023587}. The eigenvectors of $H_{0}$ can be obtained by tensorization, namely, 
\begin{equation}
\label{eq:eig_H_0}
\begin{aligned}
     H_{0}\ket{n}\otimes e_{1}&=E_{(n,1)}^{0}\ket{n}\otimes e_{1}=\Big(\omega(n+\frac{1}{2})+\frac{\Omega}{2}\Big)\ket{n}\otimes e_{1},\\
     H_{0}\ket{n}\otimes e_{-1}&=E_{(n,-1)}^{0}\ket{n}\otimes e_{-1}=\Big(\omega(n+\frac{1}{2})-\frac{\Omega}{2}\Big)\ket{n}\otimes e_{-1}.
\end{aligned}
\end{equation}
\begin{lemma}
\label{lemma:non-degenerate}
    Assume that $\Omega/\omega$ is irrational. Then for every $u_{1}\in\mathbb{R}$ and $u_{2}\in\mathbb{R}^{*}$, all eigenvalues of $H(u_{1},u_{2})$ are non-degenerate.
\end{lemma}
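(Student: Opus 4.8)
The plan is to remove the drive term $u_{2}(a^{\dagger}+a)$ by displacing the oscillator, thereby reducing $H(u_{1},u_{2})$ to a Jaynes--Cummings Hamiltonian perturbed by a static transverse field on the qubit. Fix $u_{1}\in\mathbb{R}$ and $u_{2}\in\mathbb{R}^{*}$, set $\delta=u_{2}/\omega$, and let $D(\delta)=e^{\delta(a^{\dagger}-a)}\otimes\mathbb{I}_{\mathbb{C}^{2}}$ be the associated displacement operator; it is unitary on $\mathcal{H}$, preserves $D(H_{0})$, sends the Hermite basis to the translated Hermite functions $\Phi_{n}(\cdot+\delta)$, and satisfies $D(\delta)^{\dagger}aD(\delta)=a+\delta$. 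Completing the square gives $\omega(a^{\dagger}a+\tfrac{1}{2})+u_{2}(a^{\dagger}+a)=\omega(\tilde{a}^{\dagger}\tilde{a}+\tfrac{1}{2})-u_{2}^{2}/\omega$ with $\tilde{a}=a+\delta$, while $u_{1}(a\sigma_{+}+a^{\dagger}\sigma_{-})=u_{1}(\tilde{a}\sigma_{+}+\tilde{a}^{\dagger}\sigma_{-})-u_{1}\delta\,\sigma_{x}$. Conjugating by $D(\delta)$ (which maps $\tilde{a}$ back to $a$) therefore yields
\[
D(\delta)\,H(u_{1},u_{2})\,D(\delta)^{\dagger}=\widehat{H}(u_{1},u_{2})-\frac{u_{2}^{2}}{\omega},\qquad \widehat{H}(u_{1},u_{2})=\omega\big(a^{\dagger}a+\tfrac{1}{2}\big)+\frac{\Omega}{2}\sigma_{z}+u_{1}(a\sigma_{+}+a^{\dagger}\sigma_{-})-\frac{u_{1}u_{2}}{\omega}\sigma_{x}.
\]
Since conjugation by a unitary and the addition of a scalar operator do not change multiplicities, it suffices to prove that $\widehat{H}(u_{1},u_{2})$ has simple spectrum for every $u_{2}\neq 0$.

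I would then split into two cases according to whether $u_{1}$ vanishes. If $u_{1}=0$, then $\widehat{H}(0,u_{2})=\omega(a^{\dagger}a+\tfrac{1}{2})+\frac{\Omega}{2}\sigma_{z}$, whose eigenvalues are $\omega(n+\tfrac{1}{2})\pm\tfrac{\Omega}{2}$, $n\in\mathbb{N}$, by \eqref{eq:eig_H_0}; two of them coincide only if they correspond to the same sign and the same $n$, or if $\omega(n-m)=\pm\Omega$ for some $n,m\in\mathbb{N}$, which is impossible because $\Omega/\omega\notin\mathbb{Q}$. Hence the spectrum is simple. If $u_{1}\neq 0$, then both the Jaynes--Cummings coupling constant $u_{1}$ and the transverse-field strength $-u_{1}u_{2}/\omega$ are nonzero, and in this regime the simplicity of the spectrum of $\widehat{H}(u_{1},u_{2})$ is exactly the spectral statement established for the (transverse-field-)driven Jaynes--Cummings Hamiltonian in \cite{10.1063/1.5023587}, which I would invoke directly; combining the two cases closes the proof.

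The delicate point is the case $u_{1}\neq 0$. Once the displacement has eliminated $u_{2}(a^{\dagger}+a)$, the transverse field $-\frac{u_{1}u_{2}}{\omega}\sigma_{x}$ destroys the conservation of the excitation number $N=a^{\dagger}a+|e_{1}\rangle\langle e_{1}|$ on which the analysis of the undriven model rests, coupling all of its eigensectors, so that the generically distinct eigenvalues arising from different values of $N$ can no longer become equal. Turning this heuristic into a statement valid for \emph{every} $u_{1}\neq 0$ and $u_{2}\neq 0$ — for instance via a recursion on the coefficients of a putative eigenfunction in the translated Hermite basis, showing that $\ker(\widehat{H}(u_{1},u_{2})-E)$ is at most one-dimensional — is the technical core of the argument, and it is precisely what the cited reference provides; I would use it as a black box rather than reprove it here.
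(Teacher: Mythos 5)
Your proposal is correct and follows essentially the same route as the paper: displace the oscillator by $\delta=u_{2}/\omega$ to absorb the drive into a constant, reduce to a Jaynes--Cummings Hamiltonian with an added transverse field $-u_{1}\delta\,\sigma_{x}$, handle $u_{1}=0$ by direct computation using the irrationality of $\Omega/\omega$, and for $u_{1}\neq 0$ invoke as a black box the simplicity of the spectrum of the Jaynes--Cummings model with nonzero coupling and nonzero transverse field. The only difference is the reference you invoke for that last step (the paper cites its companion work \cite{liang:hal-04174206} rather than \cite{10.1063/1.5023587}), which does not affect the substance of the argument.
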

\begin{proof}
    Consider $u_{1}\in\mathbb{R}$ and $u_{2}\in\mathbb{R}^{*}$. Define 
    {$\delta=\frac{u_2}{\omega}$} and the translated creation and annihilation operators
    \begin{equation*}
        \tilde{a}=a+\delta,\quad \tilde{a}^{\dagger}=a^{\dagger}+\delta.
    \end{equation*}
    The Hamiltonian $H$ { given by equation~\eqref{eq:Hamiltonian_two_inputs}} can be then written in the translated form
    \begin{equation}
     \begin{aligned}
         H(u_{1},u_{2})&=\omega\left((\tilde{a}-\delta)^{\dagger}(\tilde a-\delta)+\frac{1}{2}\right)+\frac{\Omega}{2}\sigma_{z}\\
         &\phantom{=}+u_{1}\left((\tilde{a}^{\dagger}-\delta)\sigma_{-}+(\tilde{a}-\delta)\sigma_{+}\right)+u_{2}(\tilde{a}^{\dagger}+\tilde{a})-2u_2\delta\\
         &=\omega\left(\tilde{a}^{\dagger}\tilde{a}+\frac{1}{2}+\delta^{2}\right)+(u_2-\delta\omega)(\tilde{a}^{\dagger}+\tilde{a})\\&\phantom{=}+\frac{\Omega}{2}\sigma_{z}\phantom{=}+u_{1}(\tilde{a}^{\dagger}\sigma_{-}+\tilde{a}\sigma_{+})
         -u_{1}\delta \sigma_{x}-2u_2\delta\\ &=\omega\left(\tilde{a}^{\dagger}\tilde{a}+\frac{1}{2}\right)+\frac{\Omega}{2}\sigma_{z}+u_{1}(\tilde{a}^{\dagger}\sigma_{-}+\tilde{a}\sigma_{+})\\
         &\phantom{=}-u_{1}\delta\sigma_{x}+(\omega\delta^2-2u_{2}\delta).
     \end{aligned}   
    \end{equation}
    Here { $\sigma_{x}$} denotes the Pauli matrix $(\begin{smallmatrix}0&1\\1&0\end{smallmatrix})$. 
    Notice that the eigenfunctions of $\tilde{a}^{\dagger}\tilde{a}$ are given by the translated Hermite functions
    \begin{equation}
        \ket{\tilde{n}}=\hat{D}^{\dagger}(\delta)\ket{n},
    \end{equation}
    where { $\hat{D}(\delta)=\exp(\delta a^{\dagger}-\delta a)$ is the displacement operator associated with $\delta$}.
Define another Hamiltonian 
\begin{equation}
    \Tilde{H}(\tilde{u}_{1},\tilde{u}_{2})=\omega\left(\tilde{a}\tilde{a}^{\dagger}+\frac{1}{2}\right)+\frac{\Omega}{2}\sigma_{z}+\tilde{u}_{1}(\tilde{a}^{\dagger}\sigma_{-}+\tilde{a}\sigma_{+})+\tilde{u}_{2} \sigma_{x}.
\end{equation}
Notice that the eigenvalues of $\tilde{H}(\tilde{u}_{1},\tilde{u}_{2})$ are non-degenerate if $\tilde{u}_{1}\neq 0$ and $\tilde{u}_{2}\neq 0$ \cite{liang:hal-04174206} and that 
\begin{equation*}
    H(u_{1},u_{2})=\tilde{H}(u_{1},-u_{1}\delta)+\omega\delta^{2}-{2u_{2}\delta}.
\end{equation*}
When $u_{1}=0$, we have $H(u_{1},u_{2})=\tilde{H}(0,0)+\omega\delta^{2}-{2u_2\delta}$. Since $\Omega/\omega$ is irrational, the eigenvalues of $H(u_{1},u_{2})$ are non-degenerate. When $u_{1}\neq 0$, $H(u_{1},u_{2})=\tilde{H}(u_{1},-u_{1}\delta)+\omega\delta^{2}-{ 2u_2\delta}$ and its eigenvalues are non-degenerate since $u_{1}\neq 0$ and $u_{1}\delta\neq 0$. We can deduce that the eigenvalues of $H(u_{1},u_{2})$ with $u_{2}\neq 0$ are non-degenerate.
\end{proof}
\begin{prop}\label{prop:weakly-conical}
    For almost every $\Omega, \omega>0$, the spectrum of $H(\cdot,\cdot)$ is weakly conically connected and the eigenvalue intersections are non-overlapping.
\end{prop}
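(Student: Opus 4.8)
The plan is to push the whole problem onto the line $\{u_2=0\}$, where $H(u_1,0)$ is the undriven Jaynes--Cummings Hamiltonian and is explicitly diagonalizable. First I would restrict to the full-measure set of parameters for which $((\Omega-\omega)/\omega)^{2}\notin\mathbb{Q}$; in particular $\Omega/\omega$ is then irrational, so Lemma~\ref{lemma:non-degenerate} applies and shows that $H(u_1,u_2)$ has simple spectrum for every $u_1\in\mathbb{R}$ and every $u_2\neq 0$. Consequently every eigenvalue intersection of $H(\cdot,\cdot)$ lies on $\{u_2=0\}$. On that line $H(u_1,0)$ commutes with the excitation number $a^{\dagger}a+\sigma_{+}\sigma_{-}$ and decomposes into the one-dimensional invariant subspace spanned by $|0\rangle\otimes e_{-1}$, with $u_1$-independent eigenvalue $E_{-1}=-\Delta/2$ (setting $\Delta:=\Omega-\omega$), and the two-dimensional invariant subspaces $V_{n}=\operatorname{span}\{|n\rangle\otimes e_{1},\,|n+1\rangle\otimes e_{-1}\}$, $n\ge 0$, on each of which the two eigenvalues are $E_{n,\pm}(u_1)=\omega(n+1)\pm\tfrac12\sqrt{\Delta^{2}+4(n+1)u_1^{2}}$. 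Since $\Delta\neq 0$, $E_{n,+}>E_{n,-}$ for all $u_1$, so each $E_{n,\pm}$ is a globally real-analytic even function of $u_1$.

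For the non-overlapping property I would record that a coincidence $E_{n,\nu}(u_1)=E_{m,s}(u_1)$ with $n\neq m$ (the case $n=m$ being impossible, and coincidences with the flat branch $E_{-1}$ handled by the same algebra with the formal value $m=-1$) forces, after isolating the square roots and squaring twice, that $y:=u_1^{2}/\omega^{2}$ is a root of $q_{n,m}(y):=y^{2}-2(n+m+2)y+(n-m)^{2}-(\Delta/\omega)^{2}$. If two coincidences with distinct index sets $\{n,m\}\neq\{n',m'\}$ occurred at the same $u_1$, subtracting $q_{n,m}(y)=q_{n',m'}(y)=0$ would force either $\{n,m\}=\{n',m'\}$ (when $n+m=n'+m'$) or $y\in\mathbb{Q}$, and the latter would make $(\Delta/\omega)^{2}$ rational; using in addition $E_{n,+}\neq E_{n,-}$ one excludes any further coincidence at the same $u_1$, in particular triple intersections. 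Hence every eigenvalue intersection has multiplicity exactly two; it is moreover isolated, being a zero of the non-constant analytic function $u_1\mapsto E_{n,\nu}(u_1)-E_{m,s}(u_1)$, with nothing off the line.

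To see that each intersection is conical or weakly conical it is enough, given multiplicity two and isolatedness, to exhibit one conical direction, and I claim $\eta=(1,0)$ always works. At an intersection $u_1=\bar u_1\neq 0$ between $E_{n,\nu}$ and $E_{m,s}$ one checks $E_{n,\nu}'(\bar u_1)\neq E_{m,s}'(\bar u_1)$: two ``$+$'' branches, or two ``$-$'' branches, would require $\Delta^{2}(n+m+2)=-4\bar u_1^{2}(n+1)(m+1)$, which is impossible; a ``$+$'' and a ``$-$'' branch have derivatives of opposite, nonzero sign; and a coincidence with $E_{-1}$ pairs a vanishing derivative with the nonzero $E_{m,s}'(\bar u_1)$. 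Thus the spectral gap opens linearly along $(1,0)$, so $(1,0)$ is a conical direction and the intersection is conical or weakly conical.

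The main obstacle is to show the spectrum is \emph{connected} by these intersections: for every $j$ there is $\bar u$ with $\lambda_{j}(\bar u)=\lambda_{j+1}(\bar u)$. Suppose not, so $\lambda_{j}<\lambda_{j+1}$ on all of $\mathbb{R}^{2}$; restricting to $\{u_2=0\}$ and taking the continuous selection $c(u_1):=\tfrac12(\lambda_{j}(u_1,0)+\lambda_{j+1}(u_1,0))$, the intermediate value theorem shows each branch $E_{-1},E_{n,\pm}$ stays on one fixed side of $c(\cdot)$ for all $u_1$ (crossing $c$ would mean hitting a value inside a spectral gap), so the set of the $j$ lowest eigenvalue branches of $H(\cdot,0)$ does not depend on $u_1$. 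I would then contradict this by comparing $u_1=0$ and $u_1\to+\infty$: at $u_1=0$ the $j$ lowest eigenvalues lie in blocks $V_{n}$ with $n$ bounded by a constant depending only on $j$ and $|\Delta|/\omega$, whereas for large $u_1$ every $E_{n,+}(u_1)\ge u_1$ diverges while $s\mapsto\omega s-\tfrac12\sqrt{\Delta^{2}+4u_1^{2}s}$ is strictly convex with minimum $\approx-u_1^{2}/(4\omega)$ attained near $s\approx u_1^{2}/(4\omega^{2})$, so the $j$ lowest eigenvalues then lie in blocks $V_{n}$ with $n$ arbitrarily large --- a different $j$-element set of branches, the desired contradiction. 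Assembling these facts yields, for a.e.\ $(\Omega,\omega)$, that the spectrum of $H(\cdot,\cdot)$ is weakly conically connected with non-overlapping intersections.
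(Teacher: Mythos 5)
Your proof is correct and follows the same overall skeleton as the paper's: restrict to $(\Delta/\omega)^2\notin\mathbb{Q}$, use Lemma~\ref{lemma:non-degenerate} to confine all intersections to the line $\{u_2=0\}$, diagonalize $H(\cdot,0)$ explicitly into the branches $E_{-1}$ and $E_{n,\pm}$, and obtain the non-overlapping property from the quadratic equation for $y=u_1^2/\omega^2$ by the same subtract-and-use-irrationality argument. Two steps are handled differently, both to your credit. First, for the connectedness of the spectrum the paper argues directly and combinatorially (each increasing branch $E_{n,+}$ must cross a decreasing branch starting above it because of the asymptotic slopes $\pm\sqrt{n+1}$, each decreasing branch crosses everything starting below it, and a chain of such crossings connects consecutive eigenvalues), whereas you argue by contradiction via a separating-curve argument: if $\lambda_j<\lambda_{j+1}$ everywhere, the $j$-element set of branches lying below the gap is independent of $u_1$, which is incompatible with the bottom of the spectrum for large $u_1$ being realized by $E_{n,-}$ with $n\sim u_1^2/(4\omega^2)$. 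Your route is indirect but arguably cleaner, since it replaces the paper's slightly informal ``either\dots or\dots we conclude'' step by a sharp topological dichotomy. Second, you actually carry out the non-tangency verification along the direction $(1,0)$ (ruling out $E'_{n,\nu}(\bar u_1)=E'_{m,s}(\bar u_1)$ by the sign argument $\Delta^2(n+m+2)=-4\bar u_1^2(n+1)(m+1)$, and noting that under the irrationality assumption no intersection occurs at $\bar u_1=0$), a point the paper merely asserts (``It can also be proven that\dots''). Two small remarks: your value $E_{-1}=-\Delta/2$ for the flat branch is what the direct computation on $|0\rangle\otimes e_{-1}$ gives (the paper writes $\Delta/2$), and this sign is immaterial to the argument; and, like the paper, you pass quickly over the fact that isolation of an intersection in $\{u:\lambda_j(u)=\lambda_{j+1}(u)\}$ also requires that only finitely many branches can approach a given spectral value on a compact $u_1$-interval (which holds here since $E_{n,\pm}(u_1)\to+\infty$ as $n\to\infty$ uniformly on compacts), so this is not a gap relative to the paper's own level of detail.
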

\begin{proof}
    Consider the Hamiltonian $h(g):=H_{0}+gH_{1}$, $g\in\mathbb{R}$, and define $\Delta=\Omega-\omega$. Set, moreover, $\delta$ to be the symbol $+$ if $\Delta\geq 0$ and the symbol $-$ if $\Delta<0$.
    According to the spectrum analysis conducted in \cite{10.1063/1.5023587}, 
    the eigenvalues of $h(g)$ are given by
    \begin{equation*}
    \begin{aligned}
        E_{n,\nu}(g)&=\omega(n+1)+\nu\frac{1}{2}\sqrt{\Delta^{2}+4g^{2}(n+1)},\qquad\mbox{ for }n\in\mathbb{N},\nu\in\{+,-\},\\
        E_{-1,\delta}(g)&=\frac{\Delta}{2}.
    \end{aligned}
    \end{equation*}
Notice that, for $n\in\mathbb{N}$ and $\nu\in\{+,-\}$, the asymptotic slope of $g\mapsto E_{n,\nu}(g)$  for $g\gg 1$
    is $\nu\sqrt{n+1}$.

    To relate these eigenvalues to those of $H_{0}$ in \eqref{eq:eig_H_0}, we  distinguish between three situations. At $g=0$, we  have
    \begin{equation*}
        \begin{aligned}
            \begin{cases}E_{(n,+)}(0)=E_{(n,1)}^{0}&\mbox{for $n\ge 0$,}\\
            E_{(n,-)}(0)=E_{(n+1,-1)}^{0}&\mbox{for $n\ge -1$,}
            \end{cases}\quad&\text{if }\Delta>0,\\
            \begin{cases}E_{(n,+)}(0)=E_{(n+1,-1)}^{0}&\mbox{for $n\ge -1$,}\\ E_{(n,-)}(0)=E_{(n,1)}^{0}&\mbox{for $n\ge 0$,}\end{cases}\quad&\text{if }\Delta<0,\\
            E_{(n,\nu)}(0)=E_{(n,1)}^{0}=E_{(n+1,-1)}^{0}\quad\mbox{for }{ \nu}\in\{+,-\}\quad&\text{if }\Delta=0,
        \end{aligned}
    \end{equation*}
    where, in the last line, $n\ge -1$ if $\nu=+$ and $n\ge 0$ if $\nu=-$. 
    Take $(n,\nu)\in(\mathbb{N}\times\{+,-\})\cup\{(-1,\delta)\}$. 

\begin{figure}[!t]
\centering
     \begin{subfigure}[b]
     {0.45\columnwidth}
         \centering
         \includegraphics[width=\columnwidth]{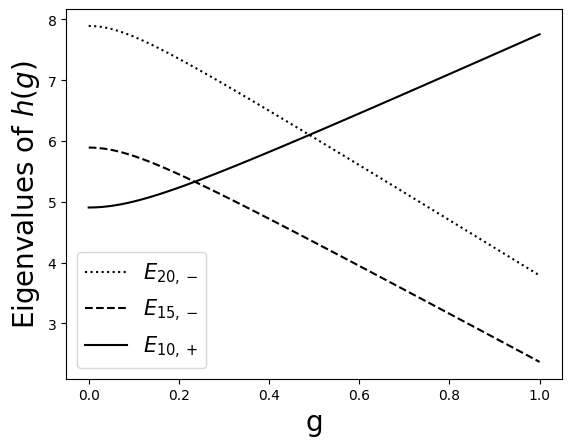}
         \caption{Case $(n,\nu)=(10,+)$}
         \label{fig:y equals x}
     \end{subfigure}
     \hfill
     \begin{subfigure}[b]{0.45\columnwidth}
         \centering
         \includegraphics[width=\columnwidth]{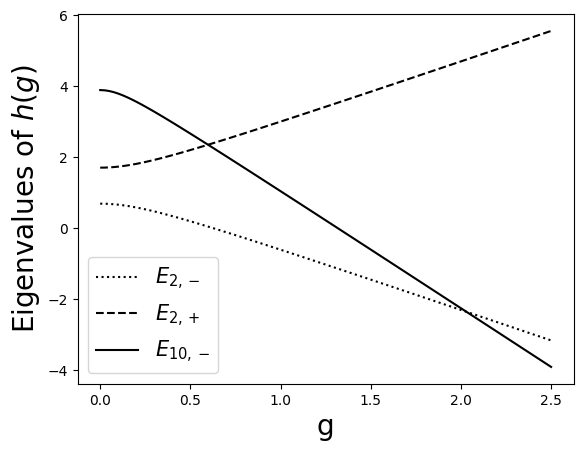}
         \caption{Case $(n,\nu)=(10,-)$}
         \label{fig:three sin x}
     \end{subfigure}
     
    \caption{
   Plot of some eigenvalues  $E_{(n,\nu)}(\cdot)$ when $(\omega,\Omega)=(2/5,\sqrt{2}).$ This illustrative example shows that, for each eigenvalue
   $E_{n,\nu}(\cdot)$, it is possible to find another eigenvalue $E_{m,\mu}(\cdot)$ such that $E_{n,\nu}(g)=E_{m,\mu}(g)$ at some positive $g$. For $(n,\nu)=(10,+)$, we can choose, for example, $(m,\mu)=(15,-)$ or $(m,\mu)=(20,-)$. Similarly, for $(n,\nu)=(10,-)$, we can choose $(m,\mu)=(2,-)$ or $(m,\mu)=(2,+)$.}
   \label{Fig:example_3}
\end{figure}
If $\nu=+$, there exists $m\in\mathbb{N}$ such that $E_{m,-}(0)>E_{n,\nu}(0)$. Since $E_{m,-}(\cdot)$ is decreasing, $E_{n,\nu}(\cdot)$ is increasing and they have 
 uniformly nonzero slope for $g\gg 1$,
we have that $E_{m,-}(g)=E_{n,\nu}(g)$ for some $g>0$.  If $\nu=-$ and $n\geq 1 $,  then for all $(m,\mu)\in(\mathbb{N}\times\{+,-\})\cup\{(-1,\delta)\}$ such that $E_{(m,\mu)}(0)<E_{n,\nu}(0)$, there exists $g\in\mathbb{R}$ such that $E_{(n,\nu)}(g)=E_{(m,\mu)}(g)$. On the other hand, if $\nu=-$ and $n\in\{-1,0\}$, then, for each $(m,\mu)$ such that $m\geq 1$ and $\nu=-$, there exists $g\in\mathbb{R}$ such that $E_{(n,\nu)}(g)=E_{(m,\mu)}(g)$. { Therefore, for each $(n,\nu)\in(\mathbb{N}\times\{+,-\})\cup\{(-1,\delta)\}$, the eigenvalue $E_{n,\nu}(\cdot)$ either intersects all eigenvalues $E_{m,\mu}(\cdot)$ such that $E_{m,\mu}(0)\leq E_{n,\nu}(0)$, or intersects a decreasing eigenvalue $E_{m,\mu}(\cdot)$ that itself intersects all eigenvalues $E_{m',\mu'}(\cdot)$ such that $E_{m',\mu'}(0)\leq E_{m,\mu}(0)$. We conclude that the spectrum of $h(\cdot)=H(\cdot,0)$ is always connected by eigenvalue intersections.} 
Illustrative examples are given in Figure~\ref{Fig:example_3}. 
Take $(n,\nu),(m,\mu)\in(\mathbb{N}\times\{+,-\})\cup\{(-1,\delta)\}$. If there exists $g\in\mathbb{R}$ such that $E_{(n,\nu)}(g)=E_{(m,\mu)}(g)$, 
then 
$x=g^{2}/\omega^{2}$ 
must solve
the  polynomial equation
    \begin{equation*}
        x^{2}-2(m+n+2)x+(m-n)^{2}-\left(\frac{\Delta}{\omega}\right)^{2}=0.
    \end{equation*}
    
We claim that if $(\Delta/\omega)^{2}$ is irrational then for every $g\in\mathbb{R}$ there  exists at most one pair of distinct eigenvalues $E_{(n,\nu)}(\cdot)$ and $E_{(m,\mu)}(\cdot)$ 
that intersect at $g$. 
Indeed, assume that there exist two pairs of distinct eigenvalues $\{E_{(n,\nu)}(\cdot),E_{(m,\mu)}(\cdot)\}$ and $\{E_{(n',\nu')}(\cdot),E_{(m',\mu')}(\cdot)\}$ intersecting at $g=\bar{g}$, with {$m\geq n$ and $m'\geq n'$}.
Hence,   $\bar{x}=(\bar{g}/\omega)^{2}$ satisfies
\begin{align}
         \bar{x}^{2}-2(m+n+2)\bar{x}+(m-n)^{2}-\left(\frac{\Delta}{\omega}\right)^{2}&=0,\label{eq:int_1}\\
         \bar{x}^{2}-2(m'+n'+2)\bar{x}+(m'-n')^{2}-\left(\frac{\Delta}{\omega}\right)^{2}&=0\label{eq:int_2}.
\end{align}
It is evident from  any of these two equations that $\bar{x}\notin\mathbb{Q}$. By subtracting $\eqref{eq:int_2}$ from $\eqref{eq:int_1}$, we  obtain that
\begin{equation*}
    2(m+n-m'-n')\bar{x}=(m-n)^{2}-(m'-n')^{2}.
\end{equation*}
Since $\bar{x}\notin\mathbb{Q}$, we deduce that $m=m'$ and $n=n'$. It can be simply checked that, in addition, $\nu=\nu'$ and $\mu=\mu'$. Thus the two pairs of eigenvalues are equal. 

We can deduce that, if $(\Delta/\omega)^2$ is irrational, the spectrum of $h(\cdot)=H(\cdot,0)$ is connected by non-overlapping eigenvalue intersections between eigenvalues. It can also be proven that the intersecting eigenvalues on $u_{2}=0$ are not tangent to each other. Moreover, by Lemma \ref{lemma:non-degenerate} ($\Omega/\omega$ is irrational since $(\Delta/\omega)^2$ is irrational), all the eigenvalues are simple when $u_{2}\neq 0$. Hence all eigenvalue intersections on $u_{2}=0$ are weakly conical with conical direction $\eta=(1,0)$.
We can conclude that for almost every $\Omega, \omega>0$ the spectrum 
of $H(\cdot,\cdot)$ 
is weakly conically connected and the eigenvalue intersections are not overlapping.
\end{proof}

\begin{proof}[Proof of Theorem \ref{theorem:Controllability_JC}]
 By Proposition \ref{prop:weakly-conical},  for almost every $\Omega,\omega>0$, the spectrum of $H(\cdot,\cdot)$ is weakly conically connected and has rationally unrelated germs at each intersection point (since the eigenvalue { intersections are} non-overlapping). 
 Since, moreover, the driven Jaynes--Cummings Hamitonian in system \eqref{eq:system_3} coincides with $H(g,u)$, then the conclusion follows from  Theorem~\ref{theorem:main}.
\end{proof}
Our result shows that for almost every (i.e., in a subset with zero-measure complement in $\mathbb{R}$) atomic transition frequency $\omega$, harmonic oscillator frequency $\Omega$, and (vacuum) Rabi frequency $g$, the corresponding Jaynes-Cummings model \eqref{eq:system_3} driven by the single time-dependent input $u(t)$, which is the amplitude of the resonant drive,  is approximately controllable { on} the unitary group. In other words, this means that any unitary operation can be realized with arbitrary precision { with respect to the strong
operator topology} at the expense of more complex control.

\section{Conclusion}
In this paper, we established a spectral condition for the controllability of quantum systems subject to a static field and a time-dependent control. The key assumption is that the spectrum of a two-input quantum system is connected by weakly conical intersections. 
We applied our result to two physical examples: a model of enantio-selective excitation and the driven Jaynes--Cummings model. 
These examples
highlight
how our result can be applied to study the controllability of more general scenarios: either by transforming the control of a multi-input system into the control via a static field and a time-dependent control, or by formally interpreting physical parameters in the system as static fields. 

\section*{Acknowledgments} 
\noindent This work has been partly supported by the ANR-DFG project “CoRoMo”
ANR-22-CE92-0077-01 and DFG project number 505622963 (KO 2301/15-1.) 
Financial support from the CNRS through the MITI interdisciplinary programs and 
from the Deutsche Forschungsgemeinschaft through CRC 1319 ELCH is gratefully acknowledged. This research has been funded in whole or in part by the French National Research Agency (ANR) as part of the QuBiCCS project "ANR-24-CE40-3008-01." 
\bibliography{reference}
\end{document}